\theoremstyle{theorem}
\newtheorem{theorem}{Theorem}[section]
\newtheorem*{theorem*}{Theorem}
\newtheorem{corollary}[theorem]{Corollary}
\newtheorem{lemma}[theorem]{Lemma}
\newtheorem{proposition}[theorem]{Proposition}
\theoremstyle{definition}
\newtheorem{definition}[theorem]{Definition}
\newtheorem{example}[theorem]{Example}
\newtheorem{remark}[theorem]{Remark}
\newcommand{\rank}{\operatorname{rk}}
\newcommand{\im}{\operatorname{im}}
\newcommand{\coker}{\operatorname{coker}}
\thanks{The author is partially supported by the Spanish Ministerio de Ciencia, Innovaci{\'{o}}n y Universidades (grant PGC2018-098321-B-I00).}
\begin{document}

\author{H. Barge}
\address{E.T.S. Ingenieros inform\'{a}ticos. Universidad Polit\'{e}cnica de Madrid. 28660 Madrid (Espa{\~{n}}a)}
\email{h.barge@upm.es}

\keywords{\v Cech cohomology,  non-saddle set, homoclinic trajectory, dissonant point, robustness, isolating block}
\subjclass[2010]{37C35, 37C70, 37B99}
\title{\v Cech cohomology, homoclinic trajectories and robustness of non-saddle sets}

\begin{abstract}
In this paper we study flows $\varphi:M\times\mathbb{R}\longrightarrow M$ having an isolated non-saddle set. We see that the complexity of the region of influence of an isolated non-saddle set $K$ depends on the way in which $K$ sits on the phase space at the cohomological level. We construct flows in surfaces having isolated non-saddle sets with a prescribed structure for its region of influence. We also study parametrized families of smooth flows and continuations of non-saddle sets. 
\end{abstract}

\maketitle

\section{Introduction and preliminaries}

In this paper we study the structure of flows $\varphi:M\times\mathbb{R}\longrightarrow M$ defined on locally compact ANR's having a connected isolated non-saddle set. The theory of non-saddle sets, first studied by Bhatia \cite{BhJDE} and Ura \cite{Ura}, although, according to Ura, introduced by Seibert in an oral communication, presents itself as a natural generalization of the classical theory of stability and attraction of the more recent theory of unstable attractors. 

Isolated non-saddle sets share many nice properties with the class of attractors (resp. repellers). From the topological point of view, as it happens with attractors, repellers and unstable attractors without external explosions, isolated non-saddle sets in ANR's have the shape of finite polyhedra. This property does not hold for general isolated invariant sets even in euclidean spaces. This was proven by    Giraldo, Mor\'on, Ruiz del Portal and Sanjurjo in \cite{GMRSo} and highlights that isolated non-saddle sets are a wide class  of isolated invariant which are suitable to be studied with topological  methods. On the other hand, from the dynamical perspective, isolated non-saddle sets present a simple local dynamical structure divided into pieces which are either attracted or repelled. The main differences with the aforementioned theories of attractors, repellers and unstable attractors without external explosions appear when looking to the global structure of these flows. More specifically, the complexity of a flow having an isolated non-saddle set relies in the structure of the region of influence of the non-saddle set which is especially rich in presence of the so-called dissonant points, a phenomenon that does not appear in the basin of attraction of an attractor neither stable nor unstable. The region of influence of an isolated non-saddle set has been deeply studied in \cite{BSdis, BSbif}. 

In spite of the similarities between the local dynamics of isolated non-saddle sets and attractors, while attractors are robust objects in both topological and dynamical senses, it is well-known that non-saddle sets are not. More specifically, small perturbations of a flow preserve attractors and their shape while small perturbations may transform isolated non-saddle sets into saddle ones and even modify their shape as it has been seen in \cite{GSRo}. However, it has been proven in \cite{BNLA, BSdis} that there are  some situations in which the robustness of the shape is equivalent to the robustness of non-saddleness.

The aim of this paper is to study some connections between the topological structure of the region of influence of an isolated non-saddle set and the way in which the non-saddle set lies in the phase space at the cohomological level. This is motivated from \cite[Theorem~18]{SGTrans} where it is proved that if $K$ is an isolated unstable attractor without external explosions in a manifold $M$, each homoclinic component of $\mathcal{A}(K)\setminus K$ induces a non-trivial, non-torsion cohomology class in $H^1(M;G)$ and these cohomology classes are independent.  A similar result for more general phase spaces is \cite[Corollary~4.8 ]{SGRMC}. In the case of  an isolated non-saddle set in an ANR $M$, the complement of  $K$ in its region of influence $\mathcal{I}(K)\setminus K$ may have components which contain homoclinic trajectories in addition to other kinds of trajectories. We shall see that each component $C$ of $\mathcal{I}(K)\setminus K$ containing homoclinic trajectories induces $k\geq 1$  non-torsion independent cohomology classes in $\check{H}^1(M;G)$, where $k$ is the number of different ways in which $K$ may be approached from $C$ minus $1$. Moreover, these cohomology classes belong to the kernel of the homomorphisms induced in \v Cech cohomology by the inclusion $i:K\hookrightarrow M$. Notice that $k=1$ in the case of a component comprised entirely of homoclinic trajectories. We also see that if $M$ is a compact $G$-orientable $n$-manifold, each component $C$ also induces $k$ independent non-torsion cohomology classes in $\check{H}^{n-1}(M;G)$ and $\check{H}^{n-1}(K;G)$ and that these cohomology classes are related by the homomorphism induced in \v Cech cohomology by the inclusion $i:K\hookrightarrow M$. These results allow us to give upper bounds to the complexity of the region of influence of isolated  non-saddle sets. Particularly interesting is the case in which the phase space is a closed orientable surface. In addition to all of this, motivated by the results \cite[Theorem~39]{BSdis} and \cite[Theorem~26]{BNLA} about the continuation properties of non-saddle sets we find necessary and sufficient conditions for the property of being non-saddle to be robust for families of differentiable flows defined smooth manifolds without further assumptions about the dimension or the cohomology of the manifold.  

In order to make the paper more readable we recall some basic concepts about topology and dynamical systems. 

\textbf{Manifolds.} We recall that an $n$-\emph{dimensional manifold}  $M$ is a second countable, Hausdorff topological space satisfying that each point has a neighborhood homeomorphic to $\mathbb{R}^n$.  On the other hand, a second countable Hausdorff space is said to be an $n$-\emph{manifold with boundary} if each point has either a neighborhood homeomorphic to $\mathbb{R}^n$ or to the upper half-space $\mathbb{H}^n=\{(x_1,\ldots,x_n)\in\mathbb{R}^n\mid x_n\geq 0\}$. If the aforementioned homeomorphisms can be chosen to be $C^\infty$ we shall say that $M$ is a \emph{smooth} or \emph{differentiable} manifold (resp. a smooth or differentiable manifold with boundary). For more information about manifolds we recommend to the reader the books by Lee \cite{Lee2000,Lee2013} and Milnor \cite{Mil}.

We are especially interested in 2-manifolds. Through this paper connected 2-manifolds will be called surfaces. We reccommend to the reader  the book \cite{Ma}  as a reference about the topology of surfaces.

\textbf{ANR's.} A metric space $X$ is said to be an \emph{Absolute neighborhood retract} or, shortly, an \emph{ANR} if it satisfies that whenever there exists an embedding $f:X\to Y$ of $X$ into a metric space $Y$ such that $f(X)$ is closed in $Y$, there exists a neighborhood $U$ of $f(X)$ such that $f(X)$ is a retract of $U$. Some examples of ANR's are manifolds, CW-complexes and polyhedra. Besides, an open subset of an ANR is an ANR and a retract of ANR is also an ANR. For more information about ANR's we recommend \cite{borre} and \cite{Hu}. 

\textbf{Algebraic Topology.} In this paper we shall use singular homology and cohomology, \v Cech cohomology and Alexander duality theorem. We shall use the notation $H_*(\cdot;G)$ and $H^*(\cdot;G)$ for singular homology and cohomology respectively and $\check{H}^*(\cdot;G)$ for \v Cech cohomology. The coefficients group $G$ is always assumed to be either $\mathbb{Z}$ or $\mathbb{Z}_2$. Since \v Cech and singular cohomology theories agree on ANR's we sometimes use both interchangeably. Some good references for this material are the book of Spanier \cite{Span}, Hatcher \cite{Hat} and Munkres \cite{Munk}. Some applications of homological techniques to dynamics can be found in the papers \cite{RSG,BGS, SGball}.

\textbf{Shape theory.} There is a form of homotopy which is very convenient to study  the global
{to\-po\-lo\-gi\-cal} properties of the invariant spaces involved in dynamics, namely
the \textit{shape theory} introduced and studied by Karol Borsuk.  Although we are not going to make a deep use of shape theory, we recommend to the reader the books \cite{Bormono, CP, DySe, Mar, MarSe} and the papers \cite{Hast, KapRod, SGRAC, SanMul, Sanjsta, sanjuni, Rob2, RobSal} to see some applications to dynamical systems. We shall use the fact that \v Cech cohomology is a shape invariant. 

The main references we follow for the basic concepts of dynamical are the books \cite{BhSz, Rob1, Pil, HSD}.

\textbf{Limit sets.} We recall that the \textit{omega}-limit and the \textit{negative omega}-limit of a
point $x\subset M$ are the sets 
\[
\omega (x)=\bigcap_{t>0}
\overline{x\cdot \lbrack t,\infty)},\quad \omega ^{\ast }(x)=\bigcap_{t>0}\overline{x\cdot (-\infty ,-t]}.
\]
 
\textbf{Sections and parallelizable flows.} Given a flow $\varphi:M\times\mathbb{R}\to M$ by a \emph{section} $S$, we mean a set which intersects each trajectory exactly in a point. 

The flow $\varphi$ is said to be \emph{parallelizable} if it admits a section $S$ such that the map $\sigma :M\to \mathbb{R}$ defined by the property $x\sigma (x)\in S$ is continuous. Notice that, if one section satisfies that condition, all of them do.

 If a flow is parallelizable and $S$ is a section, the map $h:S\times\mathbb{R}\to M$ defined by  $(x,t)\mapsto xt$ is a homeomorphism. A direct consequence of these considerations is that a section $S$ of a parallelizable flow is a strong deformation retract of $M$ and the deformation retraction is provided by the flow.

\textbf{Morse theory.} Given a smooth manifold $M$, a \emph{Morse function} on $M$ is a smooth map $f:M\longrightarrow\mathbb{R}$ whose critial points are non-degenerate, i.e., the Hessian matrix at those points is non-singular. A vector field $X$ on $M$ is said to be \emph{gradient-like} with respect to $f$ if the derivative of $f$ in the direction of $X$ is positive for every non-critical point. Notice that this means that $f$ is strictly increasing on the non-stationary trajectories of the local flow $\varphi_X$ induced by the vector field $X$. The structure of a gradient-like vector field is well-understood and has deep relationships with the topology of the manifold $M$. For more information about Morse theory we recommend to the reader the books \cite{Milnor1963, Matsumoto2002, Nicolaescu2011}.

\textbf{Invariant manifolds, stability, attractors and repellers.} 
The \emph{stable} and \emph{unstable manifolds} of an invariant compactum $K$ are defined respectively as the sets 
\[
W^s(K)=\{x\in M\mid \emptyset\neq\omega(x)\subset K\}, \quad W^u(K)=\{x\in M \mid \emptyset\neq\omega^*(x)\subset K\}.
\]

Through this paper an \emph{attractor} will be an asymptotically stable set while a \emph{repeller} will be a negatively asymptotically stable set. More precisely, an invariant compactum $K$ is said to be an attractor if it possesses a neighborhood  $U$ of $K$ such that $%
\emptyset\neq\omega (x)\subset K$ for every $x\in U$ and, in addition, every neighborhood $V$ of $K$
contains a neighborhood $W$ of $K$ such that $W\lbrack 0,\infty
)\subset V$. The latter condition is known as \emph{stability}. A repeller is an attractor for the reverse flow. In this case the stable manifold of $K$ turns out to be an open set and is called \emph{basin of attraction} of $K$ and denoted by $\mathcal{A}(K)$. In an analogous way, the unstable manifold of a repeller $K$ is an open set called \emph{basin of repulsion} of $K$ and denoted by $\mathcal{R}(K)$.

If $K$ is an attractor (resp. repeller), 
the restriction flow $\varphi |_{\mathcal{A}(K)\setminus K}$ (resp. $\varphi |_{\mathcal{R}(K)\setminus K}$) is parallelizable and its sections are compact. 

Although through this paper we require attractors to be stable, sometimes stability is dropped from the definition to consider a more general kind of attractors. We shall refer to those as \emph{unstable attractors}. For the reader interested in a detailed treatment of unstable attractors we recommend the papers \cite{MSGS, SGTrans, AthC, AthPac}.

\textbf{Isolated invariant sets and isolating blocks.} A compact invariant set $K$ is said to be an \emph{isolated invariant set} if it possesses a so-called \emph{isolating neighborhood}, that is, a compact neighborhood $N$ such that $K$ is the maximal invariant set in $N$, or
setting
\[
N^{+}=\{x\in N\mid x[0,+\infty )\subset N\},\quad N^{-}=\{x\in
N\mid x(-\infty ,0]\subset N\}; 
\]
such that $K=N^{+}\cap N^{-}$. Notice that $N^+$ and $N^-$ are compact and, respectively, positively and negatively invariant. For instance, attractors and repellers are isolated invariant sets.

To avoid trivial cases, when we consider an isolated invariant set, it will be implicit that it is a non-empty proper subset of the phase space unless otherwise specified.

We shall make use of a special type of {iso\-la\-ting} neighborhoods, the so-called \emph{isolating blocks}, which have good
topological properties. More precisely, an isolating block $N$ is an
isolating neighborhood such that there are compact sets $N^{i},N^{o}\subset
\partial N$, called the entrance and exit sets, satisfying
\begin{enumerate}
\item $\partial N=N^{i}\cup N^{o}$,

\item for every $x\in N^{i}$ there exists $\varepsilon >0$ such that $%
x[-\varepsilon ,0)\subset M\setminus N$

and for every $x\in N^{o}$ there exists $\delta >0$ such that $%
x(0,\delta ]\subset M\setminus N$,

\item for every $x\in \partial N\setminus N^{i}$ there exists $\varepsilon >0$ such that $%
x[-\varepsilon ,0)\subset \mathring{N}$

and for every $x\in \partial N\setminus N^{o}$ there exists $\delta >0$ such
that $x(0,\delta ]\subset \mathring{N}$.
\end{enumerate}

If the phase space is a smooth manifold and the flow is of class $C^r$ with $r\geq 1$, the isolating blocks can be chosen to be
manifolds with boundary which contain $N^{i}$ and $N^{o}$ as submanifolds
of their boundaries and such that $\partial N^{i}=\partial N^{o}=N^{i}\cap
N^{o}$. This kind of isolating blocks will be called \emph{isolating block manifolds}.

Associated to an isolating block $N$ there are defined two continuous functions
\[
t^o:N\setminus N^+\to[0,+\infty),\quad t^i:N\setminus N^-\to(-\infty,0]
\]
given by
\[
t^o(x):=\sup\{t\geq 0\mid x[0,t]\subset N\},\quad t^i(x):=\inf\{t\leq 0\mid x[t,0]\subset N\}.
\]
These functions are known as the \emph{exit time} and the \emph{entrance time} respectively.

Some useful references about isolated invariant sets and isolating blocks are \cite{Con, EaJDE, ConEast}. We also recommend the papers \cite{Mischaikow1995, Mischaikow1998, Mischaikow2001, Barge2020} which show some applications of the theory of isolated invariant sets to the study of the Lorenz equations.

The paper is structured as follows: in section~\ref{sec:2} we recall the basic notions of the theory of non-saddle sets, including that of the region of influence, and their fundamental properties. In section~\ref{sec:3}  we introduce the so-called \emph{complexity} of the region of influence of a connected isolated non-saddle set. This complexity is a number which encapsulates how complicated is the region of influence of the non-saddle set.  We see that this complexity has a strong relationship with the cohomology of the phase space (Theorem~\ref{homoclinic}) and, in particular, with the homomorphism induced in cohomology by the inclusion $i:K\hookrightarrow M$. As a consequence, we can infer interesting dynamical features only by looking at topological relationships. For instance, we see in Theorem~\ref{dyntor} that the complexity region of influence of a connected isolated non-saddle set in the $n$-dimensional torus is at most $1$. We also see in Theorem~\ref{dynsuf} that the region of influence of a connected isolated non-saddle sets in a closed orientable surface of genus $g$  has complexity at most the $g$ and that this bound is sharp. In addition, we construct flows on closed orientable surfaces which have connected isolated non-saddle sets whose regions of influence have complexity $g$ and satisfying some additional conditions.  Finally, in section~\ref{sec:4} we study robustness of non-saddle sets from the point of view of continuation theory. The main results of this section are Proposition~\ref{rchar} and Theorem~\ref{strongrob}. Both results establish  necessary and sufficient conditions for a connected isolated non-saddle set to be locally continued to a family of non-saddle sets. In particular,  Theorem~\ref{strongrob} establishes the equivalence between the  continuation of non-saddleness and the continuation of certain cohomological relations, i.e., the continuation of the dynamical property of non-saddleness turns out to be equivalent to the continuation of some properties of topological nature. 

\section{Isolated non-saddle sets and their region of influence}\label{sec:2}

We recall that an invariant compactum $K$ is said to be \emph{saddle} whether there exists a neighborhood $U$ of $K$ such that, for every neighborhood $V$ of $K$ there exists $x\in V$ such that the trajectory of $\gamma(x)$ leaves $U$ in the past and in the future, i.e., such that $x[0,+\infty)\nsubseteq U$ and $x(-\infty,0]\nsubseteq U$. Otherwise $K$ is said to be \emph{non-saddle}.

In this paper we are interested in those non-saddle sets which are isolated as invariant sets. Isolated non-saddle sets are characterized by the property of admitting isolating blocks of the form $N=N^+\cup N^-$ (see \cite{GMRSo}). Moreover, if $K$ is connected and $N$ is a connected isolating block, then $N$ is of the form $N=N^+\cup N^-$ \cite[Proposition~3]{BSdis}. Since the flows considered in this paper are defined on ANR's, these isolating blocks are also ANR's and the inclusion $i: K\hookrightarrow N $ is a shape equivalence \cite{GMRSo} and hence induces isomorphisms in \v Cech cohomology groups. Therefore, $K$ has the shape of a finite polyhedron and its \v Cech cohomology is of finite type, i.e. finitely generated in all dimensions and nonzero only for a finite number of them.

 The local dynamics near an isolated non-saddle set is rather simple. Each component of $N\setminus K$ is either attracted or repelled by $K$. In addition, the flow provides a deformation retraction from $N\setminus K$ onto $\partial N$. Notice that $\partial N$ is also an ANR and, hence, it has a finite number of components. In spite of the simplicity of the local structure of these flows, their global structure  may be far more complicated than the structure of a flow having either an attractor or an isolated unstable attractor without external explosions. This complexity is exhibitted in the structure of the region of influence of the non-saddle set $K$. The \emph{region of influence} of a non-saddle set $K$, is defined as the set 
\[
\mathcal{I}(K)=W^s(K)\cup W^u(K).
\]  
This set is an open subset of the phase space and its topological and the dynamical structures have been extensively studied in \cite{BSdis}. Although the topological and dynamical structures of $\mathcal{I}(K)$ share many features with those of the basin of attraction of an isolated attractor without external explosions the global structure of $\mathcal{I}(K)$ may be much more rich. As a matther of fact, while the flow restricted to the complement of an isolated stable or unstable attractor without external explosions is always parallelizable, this is not much the case for the flow restricted to $\mathcal{I}(K)\setminus K$ (see Figure~\ref{double}).

The region of influence $\mathcal{I}(K)$ is composed by three different kinds of points. 

\begin{enumerate}
\item \emph{Purely attracted points}, that is, points $x\in\mathcal{I}(K)$ with $\omega(x)\subset K$ and $\omega^*(x)\nsubseteq K$.

\item \emph{Purely repelled points}, that is, points $x\in\mathcal{I}(K)$ with $\omega^*(x)\subset K$ and $\omega(x)\nsubseteq K$.

\item \emph{Homoclinic points}, that is, points $x\in\mathcal{I}(K)$ with $\omega^*(x)\subset K$ and $\omega(x)\subset K$.
\end{enumerate}

We denote by $\mathcal{A}^*(K)$, $\mathcal{R}^*(K)$ and $\mathcal{H}(K)$ the sets of purely attracted, purely repelled and homoclinic points respectively. The three sets are invariant subsets of $M$ and they satisfy that $\mathcal{A}^*(K)\cup K$ and $\mathcal{R}^*(K)\cup K$ are closed and $\mathcal{H}(K)\setminus K$ is open (see \cite[Proposition~12]{BSdis}). The desired situation at this point would be that $\mathcal{I}(K)\setminus K$ was decomposed as a finite union of purely attracted components,  purely repelled components and homoclinic components. However, this only happens if and only if $\mathcal{H}(K)$ is a closed set which is not the case in general(see Figure~\ref{double}). What actually happens is that $\mathcal{I}(K)\setminus K$ decomposes as a finite union of purely attracted components, purely repelled components, purely homoclinic components and components that contain points of the three kinds \cite[Proposition~20]{BSdis}. Moreover, the flow restricted to the purely attracted, purely repelled and homoclinic components is parallelizable while the flow restricted to the other components is not. Notice that the components that contain the three kinds of points are exactly those that contain boundary points of $\mathcal{H}(K)$. 

We call \emph{dissonant points} to those points in $\partial \mathcal{H}(K)$ which are not in $K$. The previous discussion illustrates that all the interesting dynamical features in $\mathcal{I}(K)\setminus K$ occur in the components containing dissonant points. In fact, in absence of dissonant points, it has been seen in \cite[Theorem~19]{BSbif} the dynamics in $\mathcal{I}(K)\setminus K$ is qualitatively the same as the dynamics in the basin attraction of an isolated attractor without external explosions studied in \cite{SGTrans}.

\section{Dynamical complexity of the region of influence of non-saddle sets and the cohomology of the phase space}\label{sec:3}

In this section we study in what extent the topology of the phase space and the way in which the isolated non-saddle continuum $K$ sits on it at the cohomological level affects to the structure of the region of influence of $K$. Some results in this spirit were obtained by S\'anchez-Gabites \cite{SGTrans,SGRMC} in the case of isolated attractors without external explosions and by Barge and Sanjurjo \cite{BSdis} for isolated non-saddle continua in compact manifolds. Although the results we present here can be regarded as generalizations of the aforementioned results, they stress again that the structure of the region of influence of a non-saddle set is much more subtle than the region of attraction of an isolated attractor without external explosions.   

The following result, which generalizes \cite[Theorem~25]{BSdis} establishes cohomological obstructions to the existence of homoclinic trajectories and dissonant points in the region of influence of a non-saddle continuum for a flow defined on a locally compact ANR.

\begin{theorem}\label{structure}
Let $M$ be a connected, locally compact ANR and $K$ a connected isolated non-saddle set of a flow on $M$. Suppose that $H^{1}(M;G)=0$ or, more generally, that the homomorphism $i^*:\check{H}^1(M;G)\to\check{H}^1(K;G)$ induced by the inclusion is a monomorphism. Then, $K$ does not have dissonant points. Moreover, if $U$ is a component of $M\setminus K$, then the flow restricted to $U$ is either locally attracted by $K$ (i.e. all points lying in $U$ near $K$ are attracted by $K$) or locally repelled by $K$. Furthermore, if $N$ is an isolating block of $K$ of the form $N=N^+\cup N^-$ then each component of $M\setminus K$ contains exactly one component of  $\partial N$. 
\end{theorem}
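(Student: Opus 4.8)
The plan is to establish the third assertion first, since both the absence of dissonant points and the local attraction/repulsion dichotomy will follow from it. Fix a connected isolating block $N=N^{+}\cup N^{-}$ of $K$, which exists because $K$ is a connected isolated non-saddle set. Since $K$ is connected and the inclusion $K\hookrightarrow N$ is a shape equivalence, restriction induces isomorphisms $\check H^{*}(N)\cong\check H^{*}(K)$, and likewise for the open isolating neighborhood $\mathring N$; in particular $\mathring N$ is connected and $\check H^{0}(\mathring N)=G$. Recall also that the flow deformation retracts $N\setminus K$ onto $\partial N$, so the components of $N\setminus K$, of $\mathring N\setminus K$, and of $\partial N$ are in canonical bijection, and that $K\subset\mathring N$ forces $\partial N\cap K=\emptyset$. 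As $\partial N$ is a compact ANR it has finitely many components.

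The heart of the argument is a Mayer--Vietoris computation for the open cover $M=\mathring N\cup(M\setminus K)$, whose intersection is $\mathring N\setminus K$. Working with $G$-coefficients (and recalling that \v Cech and singular cohomology agree on these ANR's) one obtains the exact sequence
\[
\check H^{0}(\mathring N)\oplus\check H^{0}(M\setminus K)\xrightarrow{\ \phi\ }\check H^{0}(\mathring N\setminus K)\xrightarrow{\ \delta\ }\check H^{1}(M)\xrightarrow{\ \psi\ }\check H^{1}(\mathring N)\oplus\check H^{1}(M\setminus K).
\]
The first coordinate of $\psi$ is the restriction $\check H^{1}(M)\to\check H^{1}(\mathring N)\cong\check H^{1}(K)$, which is exactly $i^{*}$; by hypothesis it is a monomorphism, so $\psi$ is injective and therefore $\delta=0$. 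Exactness then makes the difference map $\phi$ surjective. I expect the main obstacle to be precisely the bookkeeping behind this step: justifying the identifications $\check H^{*}(\mathring N)\cong\check H^{*}(K)$ and $\check H^{0}(\mathring N\setminus K)\cong\check H^{0}(\partial N)$ in the \v Cech setting and checking that the restriction appearing as the first coordinate of $\psi$ really is $i^{*}$; once this is in place the remainder is formal.

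It remains to decode the surjectivity of $\phi$. First, every component $U$ of $M\setminus K$ satisfies $\overline U\cap K\neq\emptyset$: otherwise $U$ would be both closed and open in the connected space $M$ and hence equal to $M$, which is impossible. Consequently $U$ meets the neighborhood $\mathring N$ of $K$ away from $K$, so $U$ contains at least one component of $\mathring N\setminus K$, that is, at least one component of $\partial N$; in particular $M\setminus K$ has only finitely many components. Now the generator of $\check H^{0}(\mathring N)=G$ restricts to the constant function on the components of $\partial N$, while an element of $\check H^{0}(M\setminus K)$ restricts to the function assigning to each component $A$ of $\partial N$ the value it takes on the component $U(A)$ of $M\setminus K$ containing $A$. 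Hence the image of $\phi$ consists exactly of those functions on the components of $\partial N$ that are constant on each fibre of $A\mapsto U(A)$. Since $G\neq 0$, surjectivity of $\phi$ forces every such fibre to be a singleton; combined with the previous sentence this shows that each component of $M\setminus K$ contains exactly one component of $\partial N$, which is assertion (3).

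Finally I would deduce the remaining two assertions. Because $N\setminus K=(N^{+}\setminus K)\sqcup(N^{-}\setminus K)$ is a decomposition into relatively clopen sets, the first attracted and the second repelled by $K$, the unique component of $N\setminus K$ lying in a given component $U$ of $M\setminus K$ is contained entirely in one of the two pieces; thus $U$ is either locally attracted or locally repelled, which is assertion (2). For assertion (1), note that a homoclinic point $y\in U$ would have its forward trajectory eventually contained in $N^{+}\setminus K$ and its backward trajectory eventually contained in $N^{-}\setminus K$, both inside the invariant component $U$; these are two distinct components of $N\setminus K$ in $U$, contradicting (3). Hence $\mathcal H(K)=K$, so $\partial\mathcal H(K)\subset K$ and $K$ has no dissonant points.
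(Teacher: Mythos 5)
Your overall skeleton is the same as the paper's: fix a block $N=N^{+}\cup N^{-}$, use injectivity of $i^{*}$ together with a cohomological exact sequence to show that each component of $M\setminus K$ meets the block in exactly one piece, and then decode this via the clopen splitting $N\setminus K=(N^{+}\setminus K)\sqcup(N^{-}\setminus K)$ and the retraction onto $\partial N$. The difference is in the implementation of the cohomological step: the paper uses the long exact sequence of the pair $(M,N)$, excises $K$, and reads off from the sequence of $(M\setminus K,N\setminus K)$ that $\check H^{0}(M\setminus K)\to\check H^{0}(N\setminus K)$ is an isomorphism, whereas you use Mayer--Vietoris for the open cover $\{\mathring{N},\,M\setminus K\}$. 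That substitution is legitimate (and avoids excision), your decoding of the surjectivity of $\phi$ is correct, and your derivation of assertions on homoclinic points and local attraction/repulsion from the component count is sound.

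However, your implementation carries a debt that you discharge by assertion rather than proof, and this is a genuine gap. Working with the open cover forces every statement to be about $\mathring{N}$ and $\mathring{N}\setminus K$, and you claim (i) that $K\hookrightarrow\mathring{N}$ induces isomorphisms (in particular that $\mathring{N}$ is connected, which you use to identify $\check H^{0}(\mathring N)=G$ in the decoding), and (ii) that the components of $\mathring{N}\setminus K$, of $N\setminus K$ and of $\partial N$ are in canonical bijection ``because'' the flow retracts $N\setminus K$ onto $\partial N$. Neither follows from the facts the paper supplies: the shape equivalence and the deformation retraction concern $N$, not $\mathring{N}$, and the retraction of $N\setminus K$ onto $\partial N$ does not restrict to $\mathring{N}\setminus K$ (its final stage lands in $\partial N$). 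For a general compact ANR pair $K\subset\mathring{N}$ the comparison of components of $\mathring{N}\setminus K$ with those of $N\setminus K$ can fail; here it is true, but only because of the flow: one checks that forward orbits of points of $\mathring{N}\cap N^{+}$ stay in $\mathring{N}$, that points of $\partial N\cap N^{+}$ enter $\mathring{N}$ immediately (and dually for $N^{-}$), and uses these orbit segments to connect everything inside a given component. You need to supply such an argument (the same one also gives connectedness of $\mathring{N}$, since $K$ is connected and every point of $\mathring N$ flows within $\mathring N$ into any neighborhood of $K$). Alternatively, you can sidestep the issue entirely by running the paper's version with the closed pair $(M,N)$ and excision of $K$, which never mentions $\mathring{N}$. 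One smaller remark in your favor: for the injectivity of $\psi$ you do not need the identification $\check H^{1}(\mathring{N})\cong\check H^{1}(K)$ at all, since $i^{*}$ factors as $\check H^{1}(M)\to\check H^{1}(\mathring{N})\to\check H^{1}(K)$, so injectivity of $i^{*}$ already forces the first map to be injective; thus only the degree-zero claims about $\mathring N$ are truly load-bearing.
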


\begin{proof}
Consider an isolating block $N$ of $K$ such that $N=N^{+}\cup N^{-}.$ Then,  the homomorphism $j^*:\check{H}^1(M;G)\to \check{H}^1(N;G)$ induced by the inclusion $j:N\hookrightarrow M$  is a monomorphism. This follows from the equality $i^*=k^*j^*$, where $k^*$ is the isomorphism induced in \v Cech cohomology by the inclusion $k:K\hookrightarrow N$. 

 Consider the initial part of the long exact  sequence of \v Cech cohomology the pair $(M,N)$
\begin{multline*}
0\to \check{H}^0(M,N;G)\to \check{H}^0(M;G)\to \check{H}^0(N;G)\to \check{H}^1(M,N;G)\\ 
\to \check{H}^1(M;G)\xrightarrow{j^*} \check{H}^1(N;G)\to  \ldots
\end{multline*}
 Since $M$ and $N$ are connected, the homomorphism $\check{H}^0(M;G)\to \check{H}^0(N;G)$ is an isomorphism and, since $j^*$ is a monomorphism, the exactness of the sequence ensures that $\check{H}^i(M,N;G)=0$ for $i=0,1$.

On the other hand,  by excision we get 
\[
\check{H}^i(M,N;G)\cong \check{H}^i(M\setminus K, N\setminus K;G)
\]
and, as a consequence, $\check{H}^i(M\setminus K, N\setminus K;G)=0$ for $i=0,1$. Taking this into account in the long exact sequence of \v Cech cohomology of the pair $(M\setminus K, N\setminus K)$ we get that the inclusion $N\setminus K\hookrightarrow M\setminus K$ induces an isomorphism between
$\check{H}^0(M\setminus K;G)$ and $\check{H}^0(N\setminus K;G)$. This proves that each component of $M\setminus K$ contains exactly one component of $N\setminus K$. Besides, since $N^+\cap N^-=K$ it easily follows that every component of $N\setminus K$ must be either contained in $N^+\setminus N^-$ or $N^-\setminus N^+$. This shows that each component of $M\setminus K$ is either locally attracted or locally repelled by $K$, which prevents $K$ of having homoclinic trajectories and, hence, dissonant points. The remaining part of the statement  follows easily from the fact that the flow provides a deformation retraction from $N\setminus K$ onto $\partial N$. 
\end{proof}

The previous result shows that the way in which $K$ sits on the phase space at the cohomological level  plays an important role in the way in which the components of $N\setminus K$, where $N$ is an isolated block of the form $N=N^+\cup N^-$, lie on $M\setminus K$. We would like to point out that the components of $N\setminus K$ keep the information about the different ways we have to approach $K$ from $\mathcal{I}(K)\setminus K$. To make this precise we define the following equivalence relation:

\bigskip

Let $N$ and $N'$ be two isolating blocks of the form $N^+\cup N^-$ of an isolated invariant continuum $K$ of a flow defined on an ANR. We say that two components $N_1$ of $N\setminus K$ and $N'_1$ of $N'\setminus K$ are related if there exists $T\in\mathbb{R}$ such that $N_1T\subset N'_1$.

\bigskip

\begin{definition}
We shall call $K$-end of $\mathcal{I}(K)\setminus K$ to the equivalence class of a component of $N\setminus K$ where $N$ is an isolating block of the form $N=N^+\cup N^-$.
\end{definition}

It is not difficult to see that given an isolating block $N=N^+\cup N^-$, every $K$-end of $\mathcal{I}(K)\setminus K$ is represented by one and only one component of $N\setminus K$ and, hence, there are only a finite number of them.

\begin{definition}
Let $M$ be a connected, locally compact ANR and suppose that $K$ is a connected isolated non-saddle set of a flow on $M$. Let $C_i$ be a component of $\mathcal{I}(K)\setminus K$.  We define the \emph{local complexity} of $C_i$ as the difference $k_i-1$ where $k_i$ denotes the number of $K$-ends of $\mathcal{I}(K)\setminus K$ which are contained in the component $C_i$.

The \emph{complexity} $\mathfrak{c}$ of $\mathcal{I}(K)$ is defined as the sum of the local complexities of its components or, equivalently, as the difference  $k-m$ where $k$ and $m$ denote the number of $K$-ends and the number of components of $\mathcal{I}(K)\setminus K$ respectively.
\end{definition}

\begin{remark}
From the previous definitions together with  the discussion about the topological structure of the region of influence of an isolated non-saddle it follows that:
\begin{itemize}

\item The (local) complexity is well defined since different representatives of a $K$-end are contained in the same component of $\mathcal{I}(K)\setminus K$.

\item A component $C_i$ of $\mathcal{I}(K)\setminus K$ does not have homoclinic trajectories if and only if its local complexity is zero. Therefore, $\mathcal{I}(K)\setminus K$ does not contain homoclinic points if and only if the complexity of $\mathcal{I}(K)$ is zero.

\item A component $C_i$ of $\mathcal{I}(K)$ with local complexity greater than $1$ has dissonant points.

\item The local complexity of a homoclinic component is $1$ and, as a consequence, in absence of dissonant points, the complexity of $\mathcal{I}(K)$ is exactly the number of homoclinic components of $\mathcal{I}(K)\setminus K$. In the general case, the complexity is an upper bound of the number of components of $\mathcal{I}(K)\setminus K$ which contain homoclinic trajectories.

\item Theorem~\ref{structure} ensures that the region of influence of a connected isolated non-saddle set $K$ of a flow on an ANR $M$ with $H^1(M;G)=0$ or, more generally, such that the homomorphism induced in 1-dimensional \v Cech cohomology by the inclusion $i:K\hookrightarrow M$ is a monomorphism, has complexity zero.
\end{itemize}
\end{remark}

The following two examples illustrate the fact that the local complexities carry more information than the complexity of $\mathcal{I}(K)$.

\begin{example}\label{example1}
Let $M$ be a closed orientable surface of genus $2$. We consider $M$ endowed with the flow $\varphi:M\times\mathbb{R}\longrightarrow M$ depicted in Figure~\ref{double2}. This flow has an isolated non-saddle continuum $K$ which is homeomorphic to a sphere with the interiors of four disjoint topological closed disks removed. The flow in $K$ is stationary and $\mathcal{I}(K)\setminus K$ has two connected components $C_1$ and $C_2$ both of them comprised entirely of homoclinic trajectories. As a consequence, both components $C_1$ and $C_2$ have local complexity $1$ and, then, the complexity of $\mathcal{I}(K)$ is $2$. Notice that $K$ does not have dissonant points.

\end{example}

\begin{figure}
\setlength{\unitlength}{1cm}
\begin{picture}(10,6.5)
\put(-.5,0){\includegraphics[scale=.4]{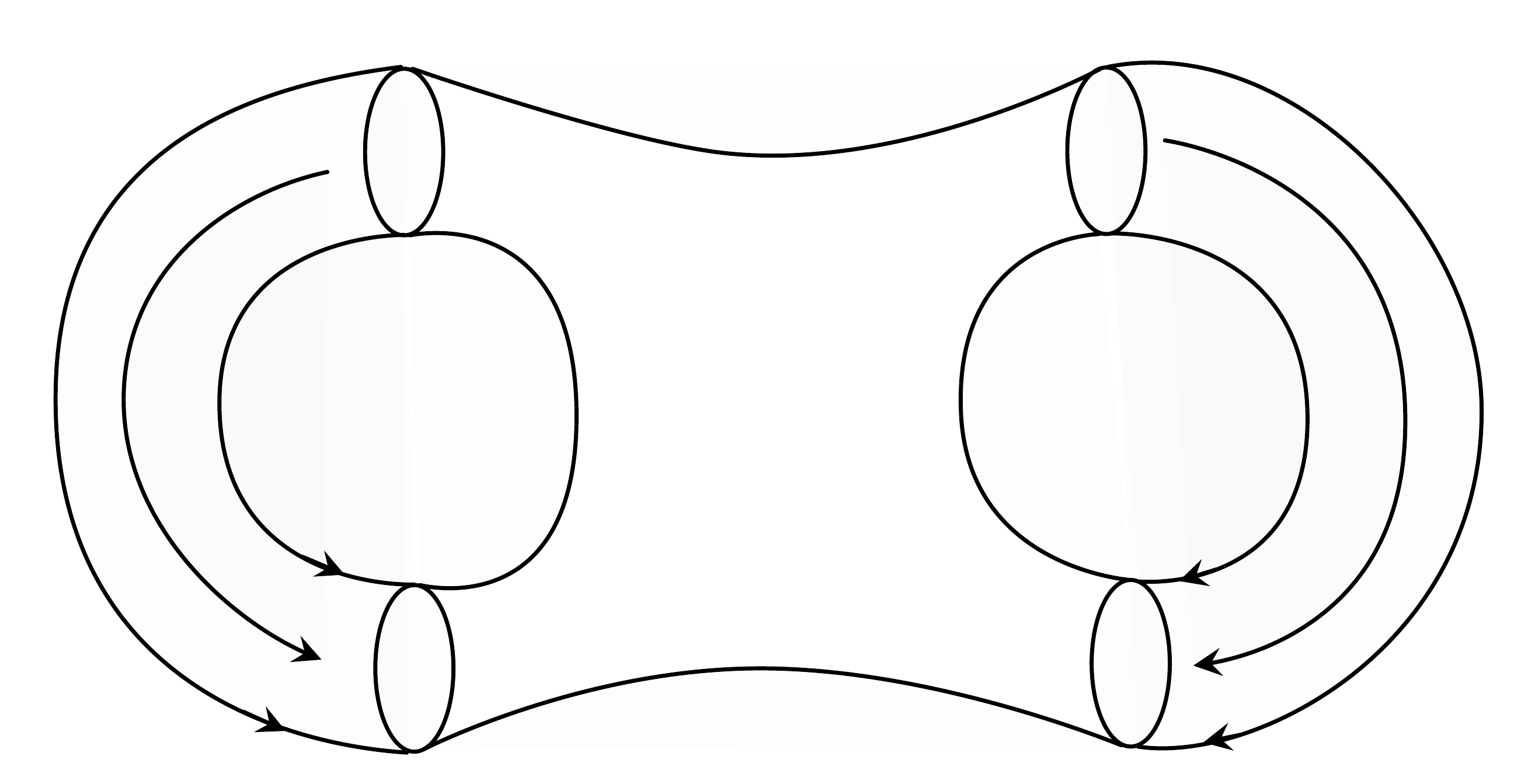}}
\put(4.5,2.5){$K$}
\put(1.2,2.5){$C_1$}
\put(8,2.5){$C_2$}
\end{picture}
\caption{Flow on a double torus which has an isolated non-saddle $K$ comprised of stationary points that is a sphere with the interiors of four disjoint topological closed disks removed. The region of influence of $K$ is the whole double torus and $\mathcal{I}(K)\setminus K$ has two components $C_1$ and $C_2$ with local complexity $1$.}
\label{double2}
\end{figure}

\begin{example}\label{example2}
Consider the flow $\hat{\varphi}:M\times\mathbb{R}\longrightarrow M$ defined on a closed orientable surface of genus $2$, depicted in Figure~\ref{double}. In this case, there is an isolated non-saddle continuum $K'$ which is homeomorphic to a sphere with the interiors of three disjoint topological closed disks removed. The flow $\hat{\varphi}$ is stationary in $K'$ and $\mathcal{I}(K')\setminus K'$ has two connected components $C_1'$ and $C_2'$. $C'_1$ is homeomorphic to an open annulus, every point in it is purely repelled by $K'$ and, hence, the local complexity of $C_1'$ is zero. On the other hand, $C_2'$ is homeomorphic to a 2-dimensional sphere with four punctures. The local complexity of $C_2'$ is $2$ and, as a consequence, it contains dissonant points. Observe that the dissonant points are those which lie in the stable and unstable manifolds of the fixed point $p\in\overline{C_2'}$.  It follows that the complexity of $\mathcal{I}(K)$ is $2$.
\end{example}

Examples \ref{example1} and \ref{example2} illustrate two flows defined on a closed orientable surface of genus $2$. Both of them have isolated non-saddle continua whose regions of influence have complexity $2$. In addition, both $\mathcal{I}(K)\setminus K$ and $\mathcal{I}(K')\setminus K'$ have two connected components. However, $K$ does not have dissonant points while $K'$ does. This stresses that the complexity does not predict in general the existence of dissonant points. On the other hand, if we look at the local complexities, we see that $\mathcal{I}(K')\setminus K'$ contains a component with local complexity $2$ and, hence, it must contain dissonant points. This discussion points out that the complexity can be seen as an upper bound of how complicated is the dynamics in the region of influence while the local complexities actually record the structure of the flow in $\mathcal{I}(K)\setminus K$.  In the 2-dimensional case, the existence of dissonant points can be detected using the Euler characteristic \cite[Theorem~32]{BSdis}.

The next result makes precise the relationship between the complexity of the region of influence of a connected isolated non-saddle set $K$ and the way in which $K$ sits on the phase space at the cohomological level. 

\begin{figure}
\setlength{\unitlength}{1cm}
\begin{picture}(10,8)
\put(-1.7,0){\includegraphics[scale=.4]{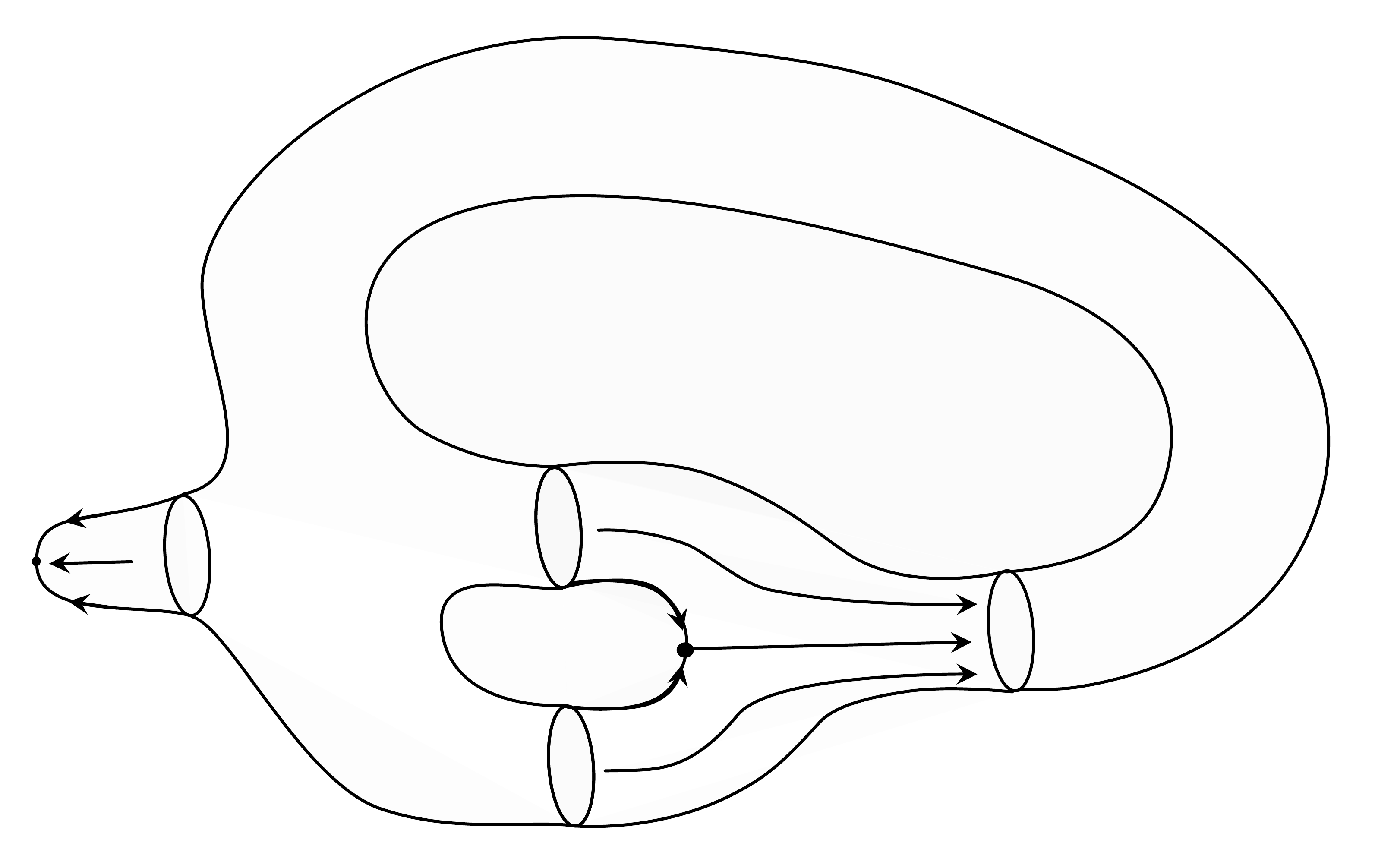}}
\put(1,2.8){$K'$}
\put(4.3,2){$p$}
\put(-.8,2){$C_1'$}
\put(5.6,.5){$C_2'$}
\end{picture}
\caption{Flow on a double torus having an isolated non-saddle set $K$ comprised of stationary points which is a sphere with the interiors of three closed topological disks removed. The region of influence of $K$ is the double torus with the point $p$ removed. $\mathcal{I}(K)\setminus K$ has only one connected component that has local complexity $2$.\label{double}}
\end{figure}

\begin{theorem}\label{homoclinic}
Let $K$ be an isolated non-saddle continuum of a flow defined on a connected, locally compact ANR $M$ and let $i:K\hookrightarrow M$ be the inclusion map. Suppose that the complexity of the region of influence of $K$ is $\mathfrak{c}$. Then, there exist  
\[
\alpha_1,\ldots,\alpha_{\mathfrak{c}}\in \check{H}^1(M;G)
\] 
which are independent non-torsion elements satisfying that $i^*(\alpha_i)=0$ for every $i=1,\ldots,\mathfrak{c}$. Moreover, if $M$ is a closed, connected and $G$-orientable $n$-manifold, then there exist  
\[
\beta_1,\ldots,\beta_{\mathfrak{c}}\in \check{H}^{n-1}(M;G)\quad\mbox{and}\quad \gamma_1,\ldots,\gamma_{\mathfrak{c}}\in \check{H}^{n-1}(K;G)
\]
which are independent non-torsion elements  such that $i^*(\beta_i)=\gamma_i$ for each $i=1,\ldots,\mathfrak{c}$.
\end{theorem}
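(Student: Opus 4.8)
The plan is to run, for both assertions, the long-exact-sequence bookkeeping already used in the proof of Theorem~\ref{structure}, but this time keeping track of the cokernels instead of forcing them to vanish. I fix an isolating block $N=N^{+}\cup N^{-}$ and recall that the components of $N\setminus K$ are in bijection with the $K$-ends, so that $N\setminus K$ has exactly $k$ components, and that the inclusion $K\hookrightarrow N$ induces isomorphisms in \v Cech cohomology, whence $\ker(i^{*})=\ker(j^{*})$ for the inclusion $j\colon N\hookrightarrow M$. As in Theorem~\ref{structure}, connectedness of $M$ and $N$ gives $\check H^{0}(M;G)\cong\check H^{0}(N;G)$, so the sequence of the pair $(M,N)$ identifies $\check H^{1}(M,N;G)$ with $\ker(i^{*})\subseteq\check H^{1}(M;G)$, and excision of $K$ yields $\check H^{1}(M,N;G)\cong\check H^{1}(M\setminus K,N\setminus K;G)$. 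First I would extract, from the low-degree part of the sequence of $(M\setminus K,N\setminus K)$, the connecting map $\delta\colon\check H^{0}(N\setminus K;G)\to\check H^{1}(M\setminus K,N\setminus K;G)$, whose image is $\coker(r)$ for the restriction $r\colon\check H^{0}(M\setminus K;G)\to\check H^{0}(N\setminus K;G)$. A short point-set remark (each component of $M\setminus K$ has frontier contained in $K$, hence meets the neighbourhood $N$) shows that $r$ is injective and that, writing $m'$ for the number of components of $M\setminus K$, the cokernel is free of rank $k-m'$. Since $\mathcal I(K)\setminus K\subseteq M\setminus K$ forces $m'\le m$, this embeds a free group of rank $k-m'\ge k-m=\mathfrak c$ into $\ker(i^{*})$, and any $\mathfrak c$ of its generators serve as $\alpha_{1},\dots,\alpha_{\mathfrak c}$ (over $\mathbb Z_{2}$ reading "non-torsion'' as $\mathbb Z_{2}$-independence).

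For the manifold statement I would pass to homology through Poincar\'e--Alexander--Lefschetz duality. Capping with the fundamental class $[M]$ produces a commuting square whose verticals are $i^{*}$ and the natural map $\rho\colon H_{1}(M;G)\to H_{1}(M,M\setminus K;G)$ and whose horizontals are the isomorphisms $\check H^{n-1}(M;G)\cong H_{1}(M;G)$ and $\check H^{n-1}(K;G)\cong H_{1}(M,M\setminus K;G)$ (here \v Cech cohomology is essential, as $K$ need not be an ANR). Hence $\im(i^{*})$ and $\im(\rho)$ have equal rank, and it suffices to bound $\rank\im(\rho)$ from below. From the sequence of $(M,M\setminus K)$ one has $\im(\rho)=\ker\partial$, where $\partial\colon H_{1}(M,M\setminus K;G)\to H_{0}(M\setminus K;G)$ has image the reduced group $\widetilde H_{0}(M\setminus K;G)$, of rank $m'-1$. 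Excising $M\setminus N$ identifies $H_{1}(M,M\setminus K;G)\cong H_{1}(N,N\setminus K;G)$, and because $N\setminus K$ has $k$ components the boundary map of $(N,N\setminus K)$ already has image of rank $k-1$; therefore $\rank H_{1}(M,M\setminus K;G)\ge k-1$ and $\rank\im(\rho)\ge(k-1)-(m'-1)=k-m'\ge\mathfrak c$. Choosing $\gamma_{1},\dots,\gamma_{\mathfrak c}$ spanning a rank-$\mathfrak c$ non-torsion part of $\im(i^{*})\subseteq\check H^{n-1}(K;G)$ and lifting them to $\beta_{i}\in\check H^{n-1}(M;G)$ with $i^{*}\beta_{i}=\gamma_{i}$, an elementary argument shows that the $\beta_{i}$ are themselves independent and non-torsion.

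The steps I regard as routine are the two long exact sequences and the counting of components; the delicate point I expect to spend effort on is the compatibility of the duality isomorphisms with the inclusion $i$, namely the commutativity of the naturality square relating $i^{*}$ in degree $n-1$ to $\rho$, together with the verification that the excision and Alexander isomorphisms carry the boundary map $\partial$ of $(M,M\setminus K)$ to that of $(N,N\setminus K)$. I must also confirm that the excision $H_{*}(M,M\setminus K)\cong H_{*}(N,N\setminus K)$ and the identification $\check H^{1}(M,N)\cong\check H^{1}(M\setminus K,N\setminus K)$ are legitimate for these possibly badly embedded compacta, which is exactly where local compactness of $M$ and the isolating-block structure enter, as in Theorem~\ref{structure}.
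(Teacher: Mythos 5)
Your proposal is correct, and for the first assertion it is essentially the paper's own proof: the same block $N=N^{+}\cup N^{-}$, the identification $\ker i^{*}=\ker j^{*}$, the exact sequences of the pairs $(M,N)$ and $(M\setminus K,N\setminus K)$ linked by excision of $K$, and the same count --- your global inequality $k-m'\geq k-m=\mathfrak{c}$ is just the summed form of the paper's per-component bookkeeping. (One small wording issue: $m'\leq m$ does not follow from the inclusion $\mathcal{I}(K)\setminus K\subseteq M\setminus K$ alone; it needs your point-set remark, because every component of $M\setminus K$ contains a component of $N\setminus K$, which lies in $\mathcal{I}(K)\setminus K$.) For the manifold assertion you use the same two ingredients as the paper --- Alexander--Lefschetz duality and the bound $k-m'\geq\mathfrak{c}$ --- but with dual bookkeeping. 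The paper stays on the cohomological side: from the sequence of $(M,K)$ and duality it obtains $\rank\coker i^{*}=\rank\widetilde{H}_0(M\setminus K;G)$, from duality inside $\mathring{N}$ it obtains $\rank\check{H}^{n-1}(K;G)\geq\rank\widetilde{H}_0(\mathring{N}\setminus K;G)$, and it then extracts $\rank\im i^{*}\geq k-m'$ by rank arithmetic, producing the $\gamma_i$ first and lifting them to $\beta_i$ at the end. You instead pass to homology once and for all through the naturality square for the cap product, identify $\rank\im i^{*}$ with $\rank\ker\partial$ in the sequence of $(M,M\setminus K)$, and get the same bound by excision to $(N,N\setminus K)$. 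Your route makes the compatibility $i^{*}(\beta_i)=\gamma_i$ structural, since it is read off from the commutative square, at the price of having to verify naturality of the duality isomorphism, which you rightly single out as the delicate step; the paper's route avoids naturality entirely, invoking only the duality isomorphisms for three separate pairs, but must then recover the relation between the $\beta_i$ and the $\gamma_i$ by the elementary lifting argument at the end. Both hinge on exactly the same component count coming from the isolating block.
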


\begin{proof} 

Let $N=N^+\cup N^-$ be an isolating block of $K$.  Reasoning as in the proof of Theorem~\ref{structure} it follows  that the homomorphism $j^*:\check{H}^1(M;G)\to \check{H}^1(N;G)$ induced by the inclusion $j:N\hookrightarrow M$  satisfies that $\ker j^*=\ker i^*$.  Consider the initial part of the long exact sequence of \v Cech cohomology of the pair $(M,N)$,
\begin{multline*}
0\to \check{H}^0(M,N;G)\to\check{H}^0(M;G)\to\check{H}^0(N;G)\to\check{H}^1(M,N;G)\\
\to\check{H}^1(M;G)\xrightarrow{j^*}\check{H}^1(N;G)\to\cdots 
\end{multline*}
Since $M$ and $N$ are connected, the second homomorphism is an isomorphism and, hence, $\check{H}^0(M,N)=0$ and $\check{H}^1(M,N)\cong\ker i^*$. Then, by excising $K$, we obtain that $\check{H}^0(M\setminus K,N\setminus K;G)=0$ and $\check{H}^1(M\setminus K,N\setminus K;G)\cong\ker i^*$. Therefore, the initial part of the long exact sequence of \v Cech cohomology of the pair $(M\setminus K,N\setminus K)$ takes the form 
\[
 0\to \check{H}^0(M\setminus K;G)\to \check{H}^0(N\setminus K;G)\to\ker i^*\to\cdots
\]

Let $C_1,\ldots, C_k$ be the components of $M\setminus K$. The exactness of the latter sequence ensures that $\ker i^*$ has a subgroup isomorphic to $H_1\oplus\cdots\oplus H_k$ where
\[
H_i=G\oplus\overset{m_i-1}\cdots\oplus G
\]
and $m_i$ is the number of components of $N\setminus K$ contained in $C_i$. Then, the first part of the result follows by observing that given a component $C_i$ of $M\setminus K$, each component $U_j$ of $\mathcal{I}(K)\setminus K$ contained in $C_i$ contributes with at least $l_j$ summands $G$ to $H_i$, where $l_j$ is the local complexity of $U_j$.

\bigskip

Let us prove the second part of the statement. Observe that, by Alexander duality, $\check{H}^n(K;G)\cong H_0(M,M\setminus K;G)=0$ and consider the terminal part of the long exact sequence of \v Cech cohomology of the pair $(M,K)$,
\[
\cdots\to \check{H}^{n-1}(M;G)\to\check{H}^{n-1}(K;G)\to\check{H}^n(M,K;G)\to \check{H}^n(M;G)\to \check{H}^n(K;G)=0.
\]
This long exact sequence breaks into the short exact sequence
\[
0\to\coker i^*\to\check{H}^n(M,K;G)\to \check{H}^n(M;G)\to 0,
\]
and, hence, $\check{H}^n(M,K;G)\cong\coker i^*\oplus G$. Another application of Alexander duality theorem ensures that $H_0(M\setminus K;G)\cong \coker i^*\oplus G$.

On the other hand, if $N=N^+\cup N^-$ by Alexander duality we get $H_1(\mathring{N},\mathring{N}\setminus K;G)\cong \check{H}^{n-1}(K;G)$ and
\[
\rank\check{H}^{n-1}(K;G)=\rank\coker i^*+\rank\im i^*=\rank\widetilde{H}_0(M\setminus K;G)+\rank\im i^*
\]

Taking this into account in the long exact sequence of reduced singular homology of the pair of ANR's $(\mathring{N},\mathring{N}\setminus K)$, it follows that 

\[
\rank \widetilde{H}_0(\mathring{N}\setminus K;G)\leq \rank\widetilde{H}_0(M\setminus K;G)+\rank\im i^*.
\]
 
Since the complexity of $\mathfrak{c}$ of $\mathcal{I}(K)$ satisfies that 
\[
\mathfrak{c}=\rank\widetilde{H}_0(\mathring{N}\setminus K;G)-\rank\widetilde{H}_0(\mathcal{I}(K)\setminus K;G)\leq\rank\widetilde{H}_0(\mathring{N}\setminus K;G)-\rank\widetilde{H}_0(M\setminus K;G),
\]
 it follows that there must be $\gamma_1,\ldots,\gamma_\mathfrak{c}$ independent non-torsion cohomology classes in $\im i^*$ and the result follows.

\end{proof}

A direct consequence of Theorem~\ref{homoclinic} is the following result which generalizes \cite[Theorem~4.6]{SGRMC}.

\begin{corollary}\label{homocoro}
Suppose that $M$ is an ANR and $K$ is an isolated {non-\-sad\-dle} continuum. Then, the complexity of $\mathcal{I}(K)$ is zero if and only if the {homomor\-phism}  $i^*:\check{H}^1(\mathcal{I}(K);G)\to \check{H}^1(K;G)$, induced by the inclusion, is injective. 
\end{corollary}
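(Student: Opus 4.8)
The plan is to prove the equivalence by applying the two preceding theorems not to $M$ but to the flow obtained by restricting $\varphi$ to its own region of influence $\mathcal{I}(K)$, regarded as the ambient space. First I would record the reductions that make this legitimate. Since $\mathcal{I}(K)=W^s(K)\cup W^u(K)$ is an open, flow-invariant subset of $M$, it is again an ANR, and it is connected: $W^s(K)$ contains $K$ (as $K$ is invariant and compact) and each of its points is joined to the continuum $K$ inside $W^s(K)$ through the closure of a forward trajectory arc, whose limit set lies in $K$; the same holds for $W^u(K)$ with backward arcs, and the two connected sets share $K$. An isolating block $N=N^+\cup N^-$ of $K$ satisfies $N\subset\mathcal{I}(K)$, because points of $N^+$ remain in $N$ forward in time and hence have $\omega$-limit inside the maximal invariant subset $K$ of $N$, and dually for $N^-$; thus $N$ is still an isolating block of $K$ for the restricted flow, and since non-saddleness is a local property, $K$ remains a connected isolated non-saddle continuum there. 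Finally, the region of influence of $K$ computed inside $\mathcal{I}(K)$ is again $\mathcal{I}(K)$, with the same $K$-ends and the same components of $\mathcal{I}(K)\setminus K$, so its complexity is still $\mathfrak{c}$. I would also observe that only the $\check{H}^1$-parts of Theorems~\ref{structure} and~\ref{homoclinic} are used, and these rest solely on the long exact and excision sequences together with connectedness of $N$ and of the ambient space; hence the weaker hypothesis that $M$ is merely an ANR (so that $\mathcal{I}(K)$ is an ANR) suffices, and the standing local-compactness assumption is not needed.

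For the implication ``$i^*$ injective $\Rightarrow\mathfrak{c}=0$'', I would invoke Theorem~\ref{structure} with ambient space $\mathcal{I}(K)$. The assumption that $i^*:\check{H}^1(\mathcal{I}(K);G)\to\check{H}^1(K;G)$ is a monomorphism is exactly its hypothesis, and its conclusion asserts that each component of $\mathcal{I}(K)\setminus K$ contains exactly one component of $N\setminus K$. Since the $K$-ends are in bijection with the components of $N\setminus K$, this forces $k=m$, where $k$ and $m$ are the numbers of $K$-ends and of components of $\mathcal{I}(K)\setminus K$; therefore $\mathfrak{c}=k-m=0$.

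For the converse ``$\mathfrak{c}=0\Rightarrow i^*$ injective'', I would argue by contraposition and apply Theorem~\ref{homoclinic} with ambient space $\mathcal{I}(K)$. If $\mathfrak{c}\geq 1$, the first part of that theorem produces $\mathfrak{c}\geq 1$ independent non-torsion classes $\alpha_1,\dots,\alpha_{\mathfrak{c}}\in\check{H}^1(\mathcal{I}(K);G)$ with $i^*(\alpha_i)=0$. In particular $\ker i^*\neq 0$, so $i^*$ is not injective. Combining the two implications yields the stated equivalence.

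The step I expect to require the most care is the reduction paragraph rather than either implication: one must verify that passing from $M$ to the ambient space $\mathcal{I}(K)$ preserves all the data feeding into the two theorems---connectedness of the ambient space, the isolating-block structure $N=N^+\cup N^-$, and above all the numerical value of the complexity $\mathfrak{c}$---and that the cohomological cores of Theorems~\ref{structure} and~\ref{homoclinic} remain valid once local compactness is dropped in favour of the bare ANR hypothesis. Once this is in place, the two halves of the corollary are immediate reinterpretations of the preceding results, which is precisely why the statement is a direct consequence of them.
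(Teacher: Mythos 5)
Your reduction to the ambient space $\mathcal{I}(K)$ (open invariant subset, hence an ANR; connected; containing an isolating block $N=N^+\cup N^-$; same components and same $K$-ends, hence the same complexity) is sound, and your second paragraph correctly proves the implication ``$i^*$ injective $\Rightarrow\mathfrak{c}=0$'' via Theorem~\ref{structure}. The gap is in your third paragraph, and it is logical, not topological: the contrapositive of ``$\mathfrak{c}=0\Rightarrow i^*$ injective'' is ``$i^*$ not injective $\Rightarrow\mathfrak{c}\geq 1$'', but what you actually prove is ``$\mathfrak{c}\geq 1\Rightarrow i^*$ not injective'', which is the contrapositive of the implication you had already established. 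So you have proved the same direction twice, and the implication ``$\mathfrak{c}=0\Rightarrow i^*$ injective'' is not addressed at all. Moreover, that missing direction is exactly the one that cannot be read off from the statement of Theorem~\ref{homoclinic}: that theorem only gives the lower bound $\rank(\ker i^*)\geq\mathfrak{c}$, so when $\mathfrak{c}=0$ it says nothing about $\ker i^*$.

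To close the gap you must use the dynamical structure of $\mathcal{I}(K)$ in the complexity-zero case. If $\mathfrak{c}=0$, every component of $\mathcal{I}(K)\setminus K$ contains exactly one $K$-end, hence (by the remark following the definition of complexity) contains no homoclinic trajectories; each such component is therefore purely attracted or purely repelled, contains exactly one component of $N\setminus K$, and the flow restricted to it is parallelizable, a section being the corresponding component of the entrance (or exit) set of $N$. Flowing each point of $\mathcal{I}(K)\setminus N$ to its first entrance point into $N$ then yields a deformation retraction of $\mathcal{I}(K)$ onto $N$, so $j^*:\check{H}^1(\mathcal{I}(K);G)\to\check{H}^1(N;G)$ is an isomorphism; since $i^*=k^*\circ j^*$ and $k^*:\check{H}^1(N;G)\to\check{H}^1(K;G)$ is an isomorphism (the inclusion $K\hookrightarrow N$ is a shape equivalence), $i^*$ is an isomorphism, in particular injective. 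Equivalently, running the exact sequence from the proof of Theorem~\ref{homoclinic} with ambient space $\mathcal{I}(K)$: when $\mathfrak{c}=0$ the restriction $\check{H}^0(\mathcal{I}(K)\setminus K;G)\to\check{H}^0(N\setminus K;G)$ is an isomorphism, so $\ker i^*$ embeds into $\ker\bigl[\check{H}^1(\mathcal{I}(K)\setminus K;G)\to\check{H}^1(N\setminus K;G)\bigr]$, which vanishes by the deformation retraction above. Without some argument of this kind the corollary's ``only if'' half remains unproven.
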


 The following result extends to the case of non-saddle sets \cite[Example~24]{SGTrans}.

\begin{proposition}\label{dyntor}
Suppose $K$ is an isolated non-saddle continuum in the $n$-{di\-men\-sio\-nal} torus $T^n$. Then, the complexity of $\mathcal{I}(K)$ is at most $1$. 
\end{proposition}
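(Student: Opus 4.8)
The plan is to argue by contradiction, playing the two families of cohomology classes furnished by Theorem~\ref{homoclinic} against each other through the multiplicative (exterior-algebra) structure of $H^*(T^n)$. Suppose $\mathfrak c\geq 2$. Since $T^n$ is a closed, orientable $n$-manifold, Theorem~\ref{homoclinic} applied with $G=\mathbb Z$ yields, on the one hand, independent non-torsion classes $\alpha_1,\dots,\alpha_{\mathfrak c}\in\check H^1(T^n;\mathbb Z)$ with $i^*(\alpha_j)=0$, and, on the other hand, independent non-torsion classes $\gamma_1,\dots,\gamma_{\mathfrak c}\in\check H^{n-1}(K;\mathbb Z)$ that lie in $\operatorname{im}\big(i^*:\check H^{n-1}(T^n;\mathbb Z)\to\check H^{n-1}(K;\mathbb Z)\big)$. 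Thus $\ker i^*$ in degree $1$ has rank $\geq\mathfrak c\geq 2$, while $\operatorname{im} i^*$ in degree $n-1$ has rank $\geq\mathfrak c\geq 2$. The whole point is that these two conditions are incompatible once one remembers that $i^*$ is a homomorphism of graded rings and that $H^*(T^n)$ is an exterior algebra.

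To use the ring structure I would pass to rational coefficients. As both $T^n$ and $K$ have \v Cech cohomology of finite type, tensoring with $\mathbb Q$ converts \emph{independent non-torsion} into \emph{linearly independent}, and $i^*_{\mathbb Q}$ is still the ring homomorphism induced by $i$. Writing $V=H^1(T^n;\mathbb Q)\cong\mathbb Q^n$, one has $H^*(T^n;\mathbb Q)\cong\Lambda V$, and the rational space $H^{n-1}(T^n;\mathbb Q)$ has as a basis the $(n-1)$-fold wedge products $e_1\wedge\cdots\wedge\widehat{e_l}\wedge\cdots\wedge e_n$ obtained by omitting a single factor. I would then pick a basis $e_1,\dots,e_n$ of $V$ for which $e_1,\dots,e_d$ span $\ker(i^*_{\mathbb Q}|_V)$, where $d=\dim\ker(i^*_{\mathbb Q}|_V)\geq\mathfrak c\geq 2$.

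The heart of the matter is now a one-line computation. Because $i^*_{\mathbb Q}$ is multiplicative, the image of a basis element $e_1\wedge\cdots\wedge\widehat{e_l}\wedge\cdots\wedge e_n$ is the product of the $i^*_{\mathbb Q}(e_j)$ with $j\neq l$; as $i^*_{\mathbb Q}(e_j)=0$ for all $j\leq d$, this product vanishes whenever some index $j\leq d$ with $j\neq l$ is still present. Since only the single index $l$ is omitted while $d\geq 2$, such an index always exists, so $i^*_{\mathbb Q}$ annihilates every basis element of $H^{n-1}(T^n;\mathbb Q)$. Hence $\operatorname{im} i^*$ in degree $n-1$ is a torsion group, contradicting the existence of the non-torsion classes $\gamma_1,\dots,\gamma_{\mathfrak c}$ in that image. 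This contradiction forces $\mathfrak c\leq 1$, which is the assertion; note that for $\mathfrak c=1$ (so $d$ possibly equal to $1$) the computation produces no contradiction, consistently with the bound being sharp.

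I expect the only genuinely delicate point to be the bookkeeping justifying the passage to $\mathbb Q$-coefficients: one must verify that the induced map in rational \v Cech cohomology is the rationalization of the integral one and remains a homomorphism of graded rings, so that the independence and non-torsion statements coming from Theorem~\ref{homoclinic} really become honest linear (in)dependence statements inside $\Lambda V$. The conceptual crux — and the part a reader might not anticipate — is the realization that one needs \emph{both} halves of Theorem~\ref{homoclinic} simultaneously, the degree-$1$ kernel classes and the degree-$(n-1)$ image classes, tied together by cup products; everything after that is routine exterior-algebra linear algebra, and it is precisely the multiplicative structure special to the torus that prevents $\dim\ker i^*$ (degree $1$) and $\dim\operatorname{im} i^*$ (degree $n-1$) from both exceeding $1$.
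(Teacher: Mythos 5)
Your proof is correct and follows essentially the same route as the paper's: both proofs play the two halves of Theorem~\ref{homoclinic} against each other via the exterior-algebra structure of $\check{H}^*(T^n)$, showing that $\rank\ker i_1^*\geq 2$ forces $i_{n-1}^*$ to annihilate all non-torsion classes in degree $n-1$, contradicting the existence of the classes $\gamma_i$. Your explicit passage to $\mathbb{Q}$-coefficients is only a minor variation (the paper argues directly over $G$), though it does tidy up a point the paper glosses over, namely that two independent elements of $\check{H}^1(T^n;\mathbb{Z})\cong\mathbb{Z}^n$ need not extend to a $\mathbb{Z}$-basis, whereas after rationalizing the basis extension is automatic.
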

\begin{proof}
We shall assume that $n>1$ since for $n=1$ the result follows directly from Theorem~\ref{homoclinic}. To prove the result we use the fact that the cohomology ring $\check{H}^*(T^n;G)$ is the exterior algebra with $n$ generators $\omega_1,\ldots,\omega_n\in \check{H}^1(T^n;G)$. In particular, we use this ring structure to show that if $\rank(\ker i_{1}^*)>1$, then $i^*_{n-1}=0$. Here we denote by $i^*_k$ the homomorphism induced by the inclusion $i:K\hookrightarrow M$ in $k$-dimensional \v Cech cohomology.   Suppose that there exist $\alpha_1,\alpha_2\in \ker i^*_1$, with $\alpha_1$ and $\alpha_2$ linearly independent. Let $\{\alpha_1,\alpha_2,\ldots,\alpha_n\}$ be a basis of $\check{H}^1(T^n;G)$ containing $\alpha_1,\alpha_2$. The structure of the cohomology ring $\check{H}^*(T^n;G)$ ensures that any element $\beta\in \check{H}^{n-1}(T^n;G)$ is of the form $\sum_{i=1}^n m_i(\alpha_1\smile\ldots\smile\widehat{\alpha}_i\smile\ldots\smile \alpha_{n})$, where the \emph{hat} symbol $\;\widehat{}\;$ over $\alpha_i$ denotes that this cohomology class is removed from the cup product. Then, since
\[
i_{n-1}^*(\beta)=\sum_{i=1}^n m_i(i_1^*(\alpha_1)\smile\ldots\smile \widehat{i_1^*(\alpha_i)}\smile\ldots\smile i_1^*(\alpha_{n}))
\]
and each summand must contain either $i_1^*(\alpha_1)$ or $i_1^*(\alpha_2)$, it follows that $i^*_{n-1}(\beta)=0$. Therefore $i^*_{n-1}$ is the zero homomorphism and the result follows from Theorem~\ref{homoclinic}.
\end{proof}

The last part of this section deals with the case of isolated non-saddle continua in closed surfaces. In this context, Theorem~\ref{homoclinic} allows us to get sharp estimates on the complexity of the region of influence of $K$. In addition, Theorem~\ref{structure} allows us to ensure that some isolated non-saddle continua must be either attractors or repellers.

\begin{proposition}\label{nonsep}
Let $K$ be an isolated non-saddle continuum of a flow on a closed surface $M$. If $\rank\check{H}^1(K;\mathbb{Z}_2)=\rank H^1(M;\mathbb{Z}_2)$ and $K$ does not disconnect $M$, then the complexity of $\mathcal{I}(K)$ must be zero. Moreover, $K$ must be either an attractor or a repeller. 
\end{proposition}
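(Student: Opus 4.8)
The plan is to reduce the whole statement to proving that the homomorphism $i^*_1\colon H^1(M;\mathbb{Z}_2)\to\check{H}^1(K;\mathbb{Z}_2)$ induced by the inclusion is a monomorphism; once this is in hand, Theorem~\ref{structure} together with the remark following the definition of complexity yields at once that $\mathcal{I}(K)$ has complexity zero and that each component of $M\setminus K$ is either locally attracted or locally repelled by $K$.

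First I would observe that every closed surface is $\mathbb{Z}_2$-orientable, so the Alexander-duality computation carried out in the proof of Theorem~\ref{homoclinic} applies verbatim with $G=\mathbb{Z}_2$ and $n=2$. Since $\check{H}^1(M;\mathbb{Z}_2)=H^1(M;\mathbb{Z}_2)$ because $M$ is a manifold, that computation gives the isomorphism $\widetilde{H}_0(M\setminus K;\mathbb{Z}_2)\cong\coker i^*_1$, where $i^*_1$ is the inclusion-induced map in degree $n-1=1$. The hypothesis that $K$ does not disconnect $M$ says precisely that $M\setminus K$ is connected, i.e. $\widetilde{H}_0(M\setminus K;\mathbb{Z}_2)=0$; hence $\coker i^*_1=0$ and $i^*_1$ is surjective.

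Next I would invoke the rank hypothesis. Since $\rank\check{H}^1(K;\mathbb{Z}_2)=\rank H^1(M;\mathbb{Z}_2)$ and both groups are finite-dimensional $\mathbb{Z}_2$-vector spaces, a surjection between them is necessarily an isomorphism, and in particular injective. Thus $i^*_1$ is a monomorphism, Theorem~\ref{structure} applies, and the first assertion follows.

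For the ``moreover'' part I would exploit the finer output of Theorem~\ref{structure}. Fix an isolating block $N=N^+\cup N^-$; as established in its proof, each component of $M\setminus K$ contains exactly one component of $N\setminus K$, and each such component lies entirely in $N^+\setminus N^-$ or in $N^-\setminus N^+$. Because $M\setminus K$ is connected, $N\setminus K$ is connected and therefore lies on a single side. If it lies in $N^+\setminus N^-$ then $N^-=K$, so that $N=N^+$ is a compact positively invariant neighborhood of $K$; for every $x\in N$ the forward orbit remains in the compact set $N$, whence $\emptyset\neq\omega(x)\subset K$, its $\omega$-limit being invariant and hence contained in the maximal invariant set $K$, and such a positively invariant isolating neighborhood makes $K$ asymptotically stable, i.e. an attractor. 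The symmetric case $N\setminus K\subset N^-\setminus N^+$ gives $N=N^-$ and makes $K$ a repeller. The only delicate point is this last passage from ``the whole block is attracted'' to asymptotic stability of $K$, which I expect to be the main obstacle; it rests on the standard characterization of attractors by isolating blocks with empty exit set.
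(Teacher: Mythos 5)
Your proof is correct and follows essentially the same route as the paper's: both deduce from the non-separation hypothesis, via Alexander duality and the long exact sequence of the pair $(M,K)$, that $i^*_1$ is surjective, hence (by the equal-rank hypothesis) an isomorphism and in particular a monomorphism, and then apply Theorem~\ref{structure}. The only cosmetic differences are that you quote the cokernel computation already carried out in the proof of Theorem~\ref{homoclinic} instead of redoing the duality argument directly, and that you spell out the final passage from a positively invariant block ($N=N^+$) to asymptotic stability of $K$, a standard step that the paper's proof leaves implicit in its concluding ``hence''.
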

\begin{proof}
 Since $K$ is a {non-se\-pa\-ra\-ting} continuum, Alexander duality ensures that
\[
 \check{H}^2(M,K;\mathbb{Z}_2)\cong H_0(M\setminus K;\mathbb{Z}_2)\cong\mathbb{Z}_2.
 \] 
 
Let us consider the long exact sequence of reduced \v Cech cohomology of the pair $(M,K)$,
\[
0\to \check{H}^1(M,K;\mathbb{Z}_2)\to\check{H}^1(M;\mathbb{Z}_2)\to\check{H}^1(K;\mathbb{Z}_2)\to \check{H}^2(M,K;\mathbb{Z}_2)\to\check{H}^2(M;\mathbb{Z}_2)\to 0.
\] 
The previous observation guarantees that the last homomorphism must be an isomorphism. As a consequence, the homomorphism $i^*:\check{H}^1(M;\mathbb{Z}_2)\to \check{H}^1(K;\mathbb{Z}_2)$ is surjective and, since $\rank \check{H}^1(K;\mathbb{Z}_2)=\rank\check{H}^1(M;\mathbb{Z}_2)$, it must be an isomorphism. Therefore, it follows from Theorem~\ref{structure} that the connected set $M\setminus K$ is either locally attracted or locally repelled by $K$ and, hence, $K$ is either an attractor or a repeller.

\end{proof}

Before stating the last result of the section we need to introduce some definitions.

\begin{definition}
We shall say that a fixed point $p$ of a flow $\varphi:M\times\mathbb{R}\longrightarrow M$ defined on a surface is \emph{topologically hyperbolic} if if possesses a neighborhood $U_p$ such that the flow in $U_p$ is topologically equivalent to the flow on $\mathbb{R}^2$ induced by the vector field $X(x,y)=(\lambda x,\mu y)$, where $\lambda$ and $\mu$ are either $+1$ or $-1$. We shall say that $p$ is a \emph{topologically hyperbolic saddle} whether $\lambda=1$ and $\mu=-1$ or viceversa. 
\end{definition}

\begin{definition}
Under the same assumptions, we shall say that $p$ is a \emph{degenerate saddle} if it possesses a neighborhood $U_p$ such that the flow in $U_p$ is topologically equivalent to a flow in $\mathbb{R}^2$ generated by a vector field of the form $X(x,y)=(\rho(x,y),0)$ where $\rho:\mathbb{R}^2\longrightarrow[0,+\infty)$ is a  non-negative smooth function which takes the value $0$ only at $(0,0)$ (see Figure~\ref{degenerate}).
\end{definition}

\begin{figure}
\includegraphics[scale=0.4]{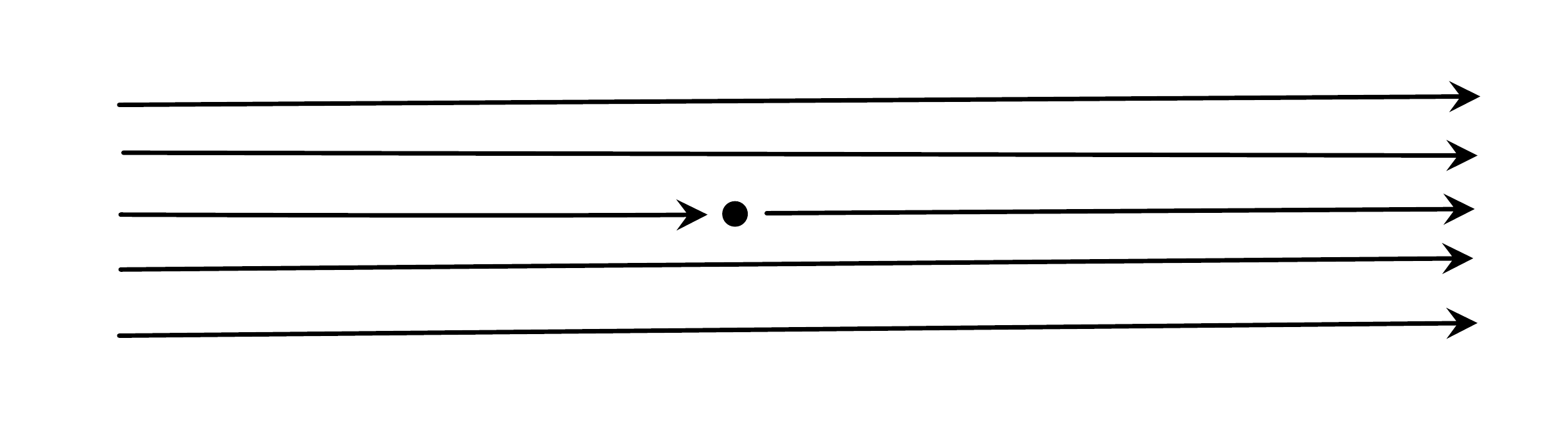}
\label{degenerate}
\caption{Model for a degenerate saddle fixed point.}
\end{figure}

\begin{theorem}\label{dynsuf}
Assume that $K$ is a connected isolated non-saddle set of a flow defined on a closed, orientable surface $M$ of genus $g$. Then,
\begin{enumerate}
\item[(i)] the complexity of $\mathcal{I}(K)$ is at most $g$.

\item[(ii)] Given $k_1,\ldots, k_n$ non-negative integers such that $g=k_1+\ldots+ k_n$ there exists a  flow on $M$ having an isolated non-saddle continuum $K$ whose region of influence $\mathcal{I}(K)$ has complexity $g$ and sastisfying that $\mathcal{I}(K)\setminus K$  has $n$ components $C_1,\ldots, C_n$ with local complexities $k_1,\ldots, k_n$ respectively. Moreover, the flow can be constructed with the following properties:
\begin{enumerate}
\item Every component $C_i$ with non-zero local complexity has dissonant points.

\item If we denote by $k$, $m$ and $l$ the number of components $C_i$ with complexities greater, equal, and less than $1$ respectively,  $M\setminus \mathcal{I}(K)$ is comprised of $g-k+l$ isolated fixed points where $g-k-m$ are topologically hyperbolic saddles, $m$ are degenerate saddle fixed points and $l$ are attracting fixed points.
\end{enumerate} 
\end{enumerate}
\end{theorem}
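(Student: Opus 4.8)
The plan is to treat the two parts by quite different methods: part~(i) by an intersection-theoretic argument on the surface, and part~(ii) by an explicit assembly of model flows.

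For part~(i) I would begin from Theorem~\ref{homoclinic} applied with $G=\mathbb{Z}$, which is legitimate because a closed orientable surface is $\mathbb{Z}$-orientable. This yields $\mathfrak{c}$ independent non-torsion classes $\alpha_1,\dots,\alpha_{\mathfrak{c}}\in\check H^1(M;\mathbb{Z})=H^1(M;\mathbb{Z})\cong\mathbb{Z}^{2g}$, all lying in $\ker i^*$. Since $M$ is a closed oriented surface, Poincar\'e duality makes the cup product $H^1(M;\mathbb{Z})\times H^1(M;\mathbb{Z})\to H^2(M;\mathbb{Z})\cong\mathbb{Z}$ a nondegenerate alternating (symplectic) form, and a symplectic $\mathbb{Q}$-space of dimension $2g$ admits no isotropic subspace of dimension exceeding $g$. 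Because the $\alpha_i$ are $\mathbb{Q}$-independent, it therefore suffices to prove that $\mathrm{span}_{\mathbb{Q}}\{\alpha_1,\dots,\alpha_{\mathfrak{c}}\}$ is isotropic; this gives $\mathfrak{c}\le g$. Note that the cruder bound $\mathfrak{c}\le\rank\ker i^*=\rank H_1(M\setminus K)$ is not enough, so the symplectic refinement is essential.

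The isotropy is the real content, and the key is a geometric reading of the $\alpha_i$. Fix an isolating block $N=N^+\cup N^-$, which in the surface case is a compact subsurface with boundary $\partial N=S_1\sqcup\dots\sqcup S_k$ a disjoint union of circles, and put $P=\overline{M\setminus N}$. Tracing the proof of Theorem~\ref{homoclinic}, the $\alpha_i$ come from the image of the connecting homomorphism $\delta\colon\check H^0(N\setminus K)\cong\check H^0(\partial N)\to\check H^1(M,K)\cong\ker i^*$. Using excision $\check H^*(M,N)\cong\check H^*(P,\partial P)$ and Poincar\'e--Lefschetz duality $\check H^q(P,\partial P)\cong H_{2-q}(P)$ on the oriented surface-with-boundary $P$, one identifies $\delta$ with the inclusion-induced map $H_1(\partial P)\to H_1(P)$; composing with $H_1(P)\to H_1(M)$ shows that each $\mathrm{PD}(\alpha_i)$ lies in the subgroup of $H_1(M)$ generated by the boundary-circle classes $[S_1],\dots,[S_k]$. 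As the $S_t$ are pairwise disjoint embedded circles in an oriented surface, $[S_s]\cdot[S_t]=0$ for all $s,t$ (including $s=t$, since an embedded circle has trivial normal bundle), so this subgroup, and with it the $\alpha_i$, spans an isotropic subspace. The main obstacle in~(i) is precisely this duality verification: showing that the classes produced abstractly by Theorem~\ref{homoclinic} are exactly the duals of the block's boundary circles, via a compatible Poincar\'e--Lefschetz ladder.

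For part~(ii) I would build $(M,\varphi)$ by gluing explicit pieces along circles. Take $K$ to be a $2$-sphere with the interiors of $g+n$ disjoint disks removed, carrying the stationary flow, and to $k_i+1$ of its boundary circles attach a planar piece $P_i$ equipped with a model flow realising the prescribed local behaviour: for $k_i=0$ a disk with a single attracting rest point (a purely repelled component); for $k_i=1$ a tube carrying one degenerate saddle whose stable and unstable manifolds furnish dissonant points; and for $k_i>1$ a sphere with $k_i+1$ holes carrying a chain of $k_i-1$ topologically hyperbolic saddles arranged so that $C_i$ has exactly $k_i+1$ $K$-ends (hence local complexity $k_i$) together with dissonant points. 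One chooses the attaching homeomorphisms and matches entrance/exit sets across the gluing circles so that $\varphi$ is a well-defined (indeed smoothable) flow, a collar $N$ of $K$ is an isolating block of the form $N^+\cup N^-$, and $K$ is a connected isolated non-saddle continuum. A direct Euler-characteristic count then certifies the genus,
\[
\chi(M)=\chi(K)+\sum_{i=1}^{n}\chi(P_i)=\big(2-(g+n)\big)+\sum_{i=1}^{n}(1-k_i)=2-2g,
\]
while inspection of the rest points exhibits $M\setminus\mathcal{I}(K)$ as the $l$ attracting points, the $m$ degenerate saddles and the $\sum_{k_i>1}(k_i-1)=g-k-m$ hyperbolic saddles, i.e.\ $g-k+l$ fixed points in total, exactly as claimed.

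The hardest part of~(ii) is the design of the model flow on the pieces with $k_i\ge 1$: within a single connected component one must realise exactly $k_i+1$ distinct ways of approaching $K$ and genuine dissonant points, while keeping the only rest points the prescribed $k_i-1$ hyperbolic saddles (resp.\ the one degenerate saddle), excluding spurious recurrence, and ensuring $C^\infty$-compatible gluing to the stationary flow on $K$ and to neighbouring pieces along the boundary circles. Controlling the saddle connections so that the invariant manifolds cut each piece into precisely the intended homoclinic, purely attracted and purely repelled regions is the delicate point; here the sharpness asserted in~(i)---read as the requirement that the $\alpha_i$ fill out a maximal isotropic (Lagrangian) subspace---serves as a consistency check on how many handles and $K$-ends each component must contribute.
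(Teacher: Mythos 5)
Your part (ii) is essentially the paper's own construction: the same decomposition of $M$ into a sphere with $g+n$ holes carrying the stationary flow, capping disks with one attracting point for $k_i=0$, an annulus with a single degenerate saddle for $k_i=1$, and spheres with $k_i+1$ holes carrying $k_i-1$ topologically hyperbolic saddles for $k_i>1$, with the same Euler characteristic computation and the same fixed-point counts $g-k-m$, $m$, $l$. The only ingredient the paper makes explicit that you leave implicit is the mechanism for stopping the flow on the gluing circles and splitting the broken orbits (Beck's theorem applied to sections of the model flows); citing that theorem would close the ``smooth matching'' step you describe informally. Part (i) is where you genuinely diverge. The paper's argument is purely algebraic and very short: since $K$ is a proper subcontinuum of a surface, $\check{H}^1(K;G)$ is free, so $\check{H}^1(M;G)\cong\ker i^*\oplus\im i^*$ and $\rank\ker i^*+\rank\im i^*=2g$; Theorem~\ref{homoclinic} bounds $\mathfrak{c}$ both by $\rank\ker i^*$ (first half) and by $\rank\im i^*$ in dimension $n-1=1$ (second half), whence $\mathfrak{c}\le g$. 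You instead use only the kernel classes and refine them geometrically, identifying the $\alpha_i$ with Poincar\'e duals of combinations of the boundary circles of the block, which span an isotropic subspace of the intersection form; your duality-ladder identification of the connecting homomorphism with $H_1(\partial P)\to H_1(P)$ is correct, and the symplectic bound then gives $\mathfrak{c}\le g$. Your route is longer but more geometric (it exhibits $g$ as a Lagrangian dimension) and avoids both the freeness of $\check{H}^1(K;G)$ and the second half of Theorem~\ref{homoclinic}.

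There is one genuine gap in your part (i) as written: you need the isolating block $N=N^+\cup N^-$ to be a compact subsurface whose boundary is a disjoint union of circles, so that $P=\overline{M\setminus N}$ is an oriented surface with boundary on which Lefschetz duality applies. The paper guarantees manifold isolating blocks only for flows of class $C^r$, $r\ge 1$, whereas Theorem~\ref{dynsuf} is stated for continuous flows, and the paper's proof of (i) never uses a manifold block. You can repair this in two ways. One is to invoke Gutierrez's smoothing theorem (every continuous flow on a compact surface is topologically equivalent to a $C^1$ flow) and observe that $\mathfrak{c}$ is invariant under topological equivalence. The cleaner fix is to make the isotropy purely formal: each $\alpha_i$ lies in $\im\bigl(\check{H}^1(M,N;G)\to\check{H}^1(M;G)\bigr)$, hence restricts to zero on $\mathring{N}$, and, being the image under excision of a class in the image of the connecting homomorphism of the pair $(M\setminus K,N\setminus K)$, it also restricts to zero on $M\setminus K$ by exactness and naturality. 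Since $\mathring{N}$ and $M\setminus K$ are open and cover $M$, every product $\alpha_i\smile\alpha_j$ (including $i=j$) lifts to $\check{H}^2\bigl(M,\mathring{N}\cup(M\setminus K);G\bigr)=\check{H}^2(M,M;G)=0$, so the span of the $\alpha_i$ is isotropic with no manifold structure on $N$ needed at all.
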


\begin{proof}
Since $K$ is proper subcontinuum of a surface \cite[Corolario~B.9]{SGth} ensures that $\check{H}^1(K;G)=G\oplus\overset{r}\cdots\oplus G$. Then,  $\check{H}^1(M;G)=\ker i^*\oplus\im i^*$ and using the fact that  $\rank \check{H}^1(M;G)=2g$  we get that either the rank of $\ker i^*$ or the rank of $\im i^*$ is at most $g$.  As a consequence, Theorem~\ref{homoclinic} ensures (i). 

Consider non-negative integers $k_1,\ldots, k_n$ such that $k_1+\ldots+k_n=g$ and suppose that $k_i=0$ for every $i\leq l$ and $k_i\geq 1$ if $i\geq l+1$. To prove (ii) we first observe that a closed orientable surface of genus $g$ can be constructed gluing together the 2-manifolds with boundary $K$, $D_1,\ldots, D_l$ and $H_{l+1},\ldots, H_n$ where :

\begin{enumerate}
\item $K$ is a sphere  with the interiors of $g+n$ disjoint topological closed disks removed.

\item  Each $D_i$ is a topological closed disk.

\item Each $H_i$ is a sphere with the interiors of $k_i+1$ disjoint topological closed disks removed.
\end{enumerate}

We obtain a closed surface $M$ by attaching to $K$ the $D_i$'s and the $H_i$'s in such a way that each $D_i$ caps a hole of $K$ and each $H_i$ connects $k_i+1$ holes of $K$. This can be done in such a way that the surface $M$ obtained is orientable. The Euler characteristic of this surface is
\begin{multline*}
\chi(M)=\chi(K)+\sum_{i=1}^l\chi(D_i)+\sum_{i=l+1}^n\chi(H_i)=(1-g-n)+l+\sum_{i=l+1}^n(1-k_i)\\
=(1-g-n)+l+(n-l+1)-g=2-2g
\end{multline*}
and, as a consequence, $M$ is a closed orientable surface of genus $g$.

We shall define a flow $\varphi:M\times\mathbb{R}\longrightarrow M$ with the desired properties.  We assume that $\varphi$ is stationary in $K$. As a consequence, the $D_i$'s and the $H_i's$ are also invariant and the flow is stationary in their boundaries.

 Let us define the flow on the disks $D_i$. Each disk $D_i$ comes equipped with a homeomorphism $h_i:D_i\longrightarrow D$ to the closed unit disk $D\subset\mathbb{R}^2$. Consider in $\mathbb{R}^2$ the flow $\varphi_{X}$ induced by the vector field $X(x,y)=(-x,-y)$. This flow has the origin as a global attractor. Since the unit circle is a global section of the flow $\varphi_{X}|_{\mathbb{R}^2\setminus\{(0,0)\}}$ we can use Beck's theorem \cite{Beck} to construct a new flow $\bar{\varphi}$ in $\mathbb{R}^2$ leaving fixed every point of the unit circle and breaking every trajectory $\gamma$ of the original flow, different from the origin, into three trajectories: the oriented ray of $\gamma$ which connects infinity with a point of $\partial D$, the endpoint of this ray, and the oriented ray which connects this point with the origin. We observe that the closed unit disk $D$ is invariant under $\bar{\varphi}$ and that the pair $(\{0\},\partial D)$ is an attractor-repeller decomposition of $\bar{\varphi}|_D$ (see Figure~\ref{disk}). We define $\varphi|_{D_i}=h_i^{-1}\circ\bar{\varphi}|_D$.

\begin{figure}
\centering
\includegraphics[scale=.4]{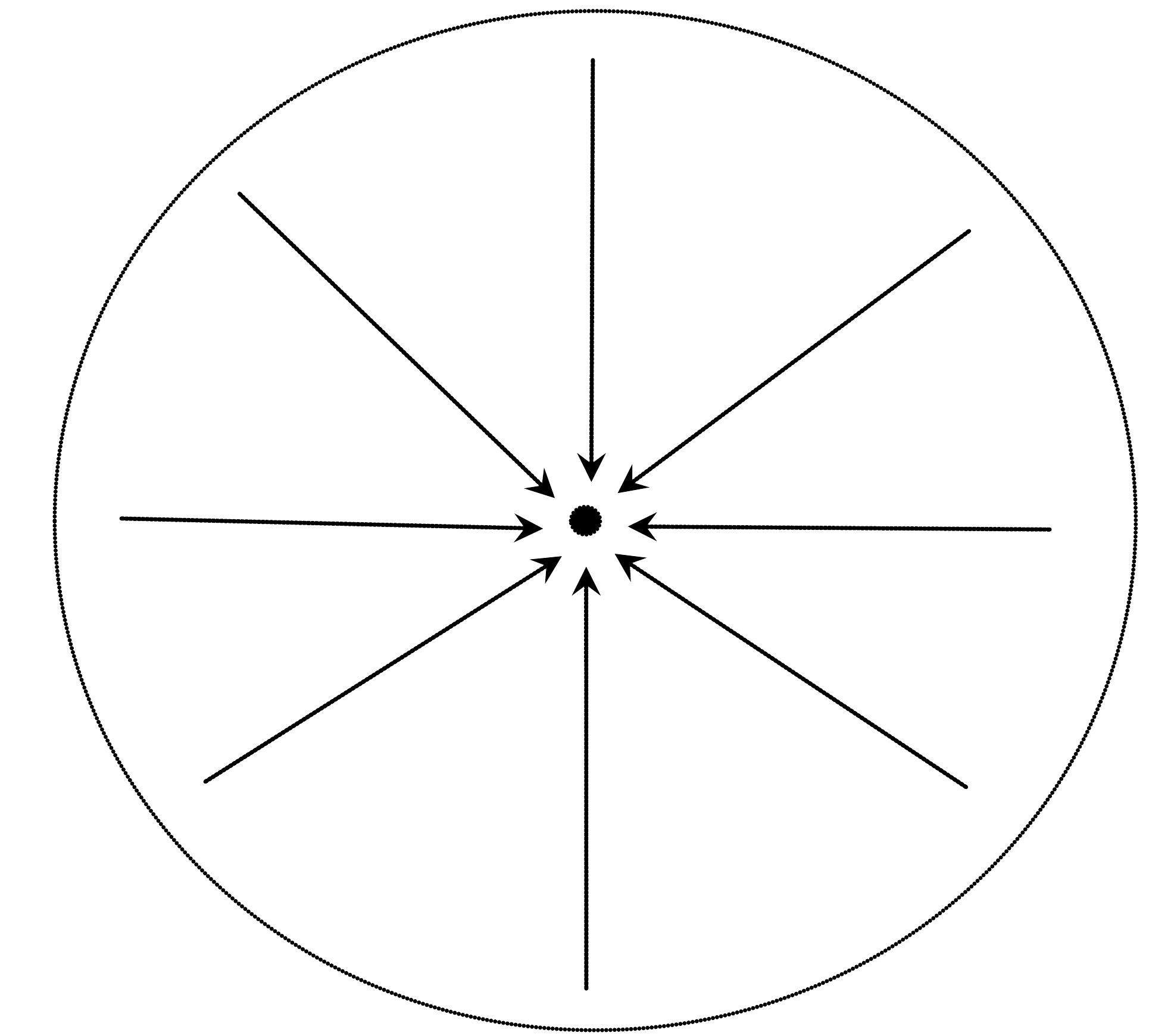}
\caption{Flow on a disk which has its center $C$  as an asymptotically stable fixed point and its boundary as a repelling circle of fixed points. The pair $(\{0\},\partial D)$ is an attractor-repeller decomposition of this flow.}
\label{disk}
\end{figure}

To define the flow in the $H_i$'s we have to separate two different situations: (A) $k_i=1$ and (B) $k_i>1$.  To construct te flow in case (A) observe that if $k_i=1$ there exists a homeomorphism $\bar{h}_i: H_i\longrightarrow S^1\times [0,1]$. We fix a point $z\in S^1$ and consider in $S^1\times [0,1]$ the flow $\phi$ which is stationary in the boundary $S^1\times\{0,1\}$ and such that the trajectories of points in $(S^1\times(0,1))\setminus ({z}\times(0,1))$ move from $S^1\times\{0\}$ to $S^1\times\{1\}$ along the fibers while the fiber $\{z\}\times (0,1)$ is broken into three orbits, covering $\{z\}\times (0,1/2)$, $\{z\}\times \{1/2\}$ and $\{z\}\times (1/2,1)$ respectively (see Figure~\ref{annulus}). We define $\varphi|_{H_i}=\bar{h}_i^{-1}\circ\phi$. 

\begin{figure}[h]
\centering
\includegraphics[scale=0.4]{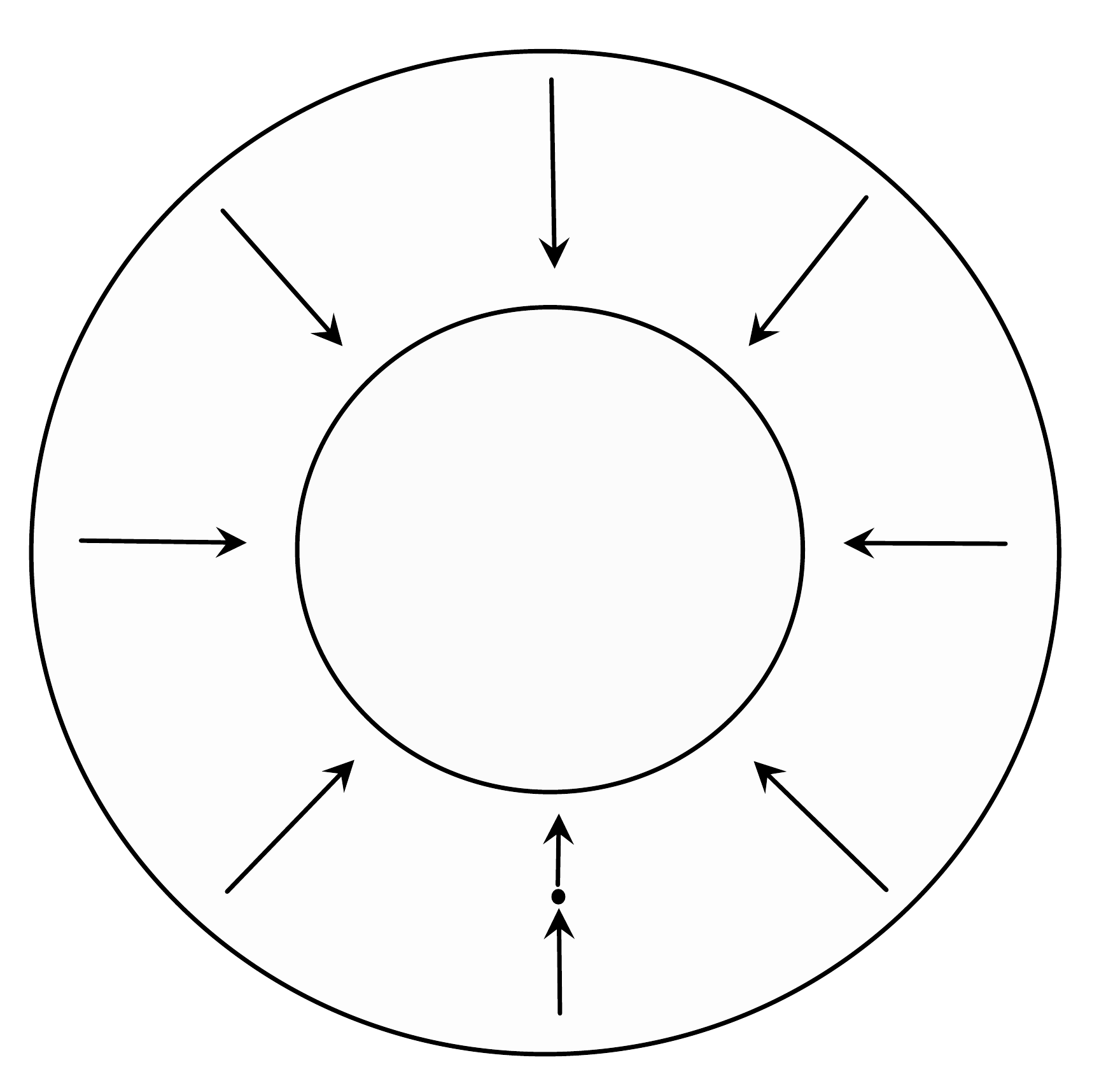}
\caption{Flow on $S^1\times[0,1]$ which has $S^1\times \{0\}$ as a repelling circle of fixed points, $S^1\times\{1\}$ as an attracting circle of fixed points and the point $\{z\}\times\{1/2\}$ as a degenerate saddle fixed point.}
\label{annulus}
\end{figure}

To construct the flow in case (B) consider a sphere $S\subset\mathbb{R}^3$ embedded in such a way that the height function $f:S\longrightarrow\mathbb{R}$ with respect to some plane has $k_i$ maxima, all of them contained in the same level set $f^{-1}(c)$, $k_i-1$ saddle critical points, all of them in the same level set $f^{-1}(b)$ and one minimum at $f^{-1}(a)$ (see Figure~\ref{Morse}). To simplify the notation we shall denote by $S_A$ to $f^{-1}(A)$ for any $A\subset\mathbb{R}$ and by $S_p$ to $f^{-1}(p)$ for any $p\in\mathbb{R}$.

If we choose $\varepsilon>0$ sufficiently small we have that $S_{[a,a+\varepsilon]}\cup S_{[c-\varepsilon,c]}$ is the disjoint union of $k_i+1$ closed topological disks and there exists a homeomorphism $\hat{h}_i:H_i\longrightarrow S\setminus(S_{[a,a+\varepsilon)}\cup S_{(c-\varepsilon,c]})$. We consider a flow $\varphi_{Y}$ in $S$ induced by a vector field $Y$ which satisfies that $-Y$ is gradient-like with respect to $f$. The flow $\varphi_Y$ has $k_i$ repelling hyperbolic fixed points, $k_i-1$ saddle hyperbolic fixed points and one attracting hyperbolic fixed point. If $p$ is one of the saddle fixed points, each one of the two branches of its stable manifold tends to a different repelling point in negative time, while  both two branches of its unstable manifold are attracted by the attracting fixed point.  Notice that the $k_i+1$ disjoint topological circles $S_{(a+\varepsilon)}\cup S_{(c-\varepsilon)}$ form  a local section for the flow $\varphi_Y$. Invoking again Beck's theorem we construct a new flow $\hat{\varphi}$ which is stationary in $S_a\cup S_c$ and such that, each non-stationary trajectory of $\varphi_Y$ is borken into (a) three trajectories if it connects a repelling point with a saddle or a saddle with the attracting point or (b) into four trajectories if it connects a repelling point with the attracting point. A trajectory $\gamma$ in case (a) splits into the oriented ray of $\gamma$ which connects a repelling point (resp. a saddle point) with a point of $S_{c-\varepsilon}$ (resp. with a point of $S_{a+\varepsilon}$), the right endpoint of this ray, and the oriented ray of $\gamma$ that connects this endpoint with the saddle point. A trajectory $\bar{\gamma}$ in case (b) splits into the oriented ray of $\bar{\gamma}$ that connects a repelling point with a point of $S_{c-\varepsilon}$, the right endpoint of this ray, the oriented ray which connects this endpoint with a point of $S_{a+\varepsilon}$, the right endpoint of this ray, and the ray that connects this endpoint with the attracting fixed point. Notice that $\hat{h}_i(H_i)=S\setminus(S_{[a,a+\varepsilon)}\cup S_{(c-\varepsilon,c]})$ is invariant under $\hat{\varphi}$ and, hence, we define $\varphi|_{H_i}=\hat{h}_i^{-1}\circ\hat{\varphi}|_{\hat{h}(H_i)}$. This flow admits a Morse decomposition $\{M_1,M_2,M_3\}$, where $M_1$ is an attracting circle of fixed points, $M_2$ is the union of  the $k_i-1$ topologically hyperbolic saddle fixed points  and $M_3$ is a union of $k_i$ repelling circles of fixed points (see Figure~\ref{handle}).

\begin{figure}
\setlength{\unitlength}{1cm}
\begin{picture}(10,9)
\put(0,-.5){\includegraphics[scale=0.3]{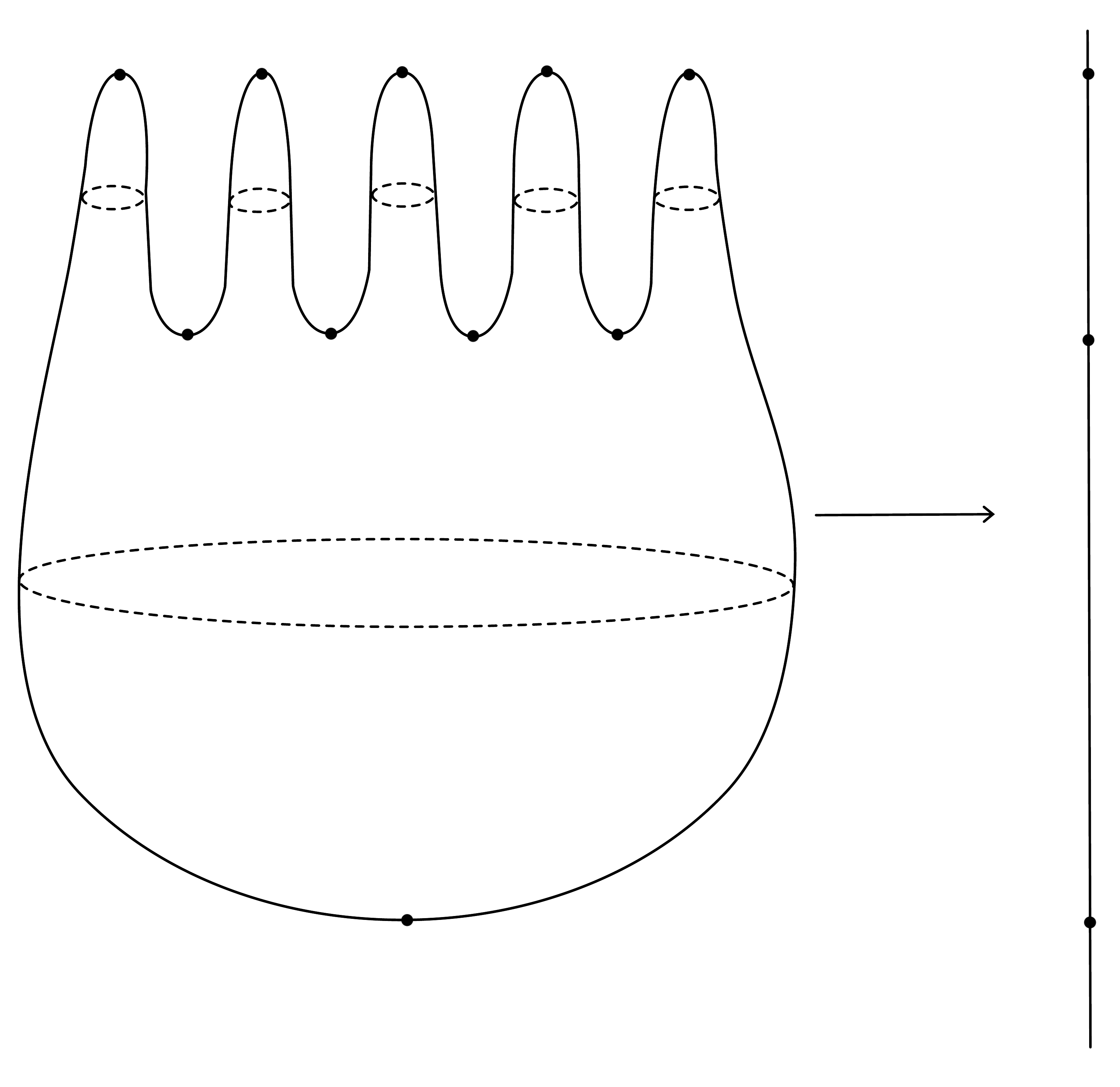}}
\put(7.7,4.6){$f$}
\put(9.5,0.85){$a$}
\put(9.5,5.85){$b$}
\put(9.5,8.15){$c$}
\end{picture}
\caption{A sphere in $\mathbb{R}^3$ embedded in such a way that the height function with respect to some plane has five maxima at height $c$, four saddle critical points at height $b$ and one minimum at height $a$.}
\label{Morse}
\end{figure}

It is clear from the construction that $K$ is an isolated non-saddle set and that if we denote by $L$ the set comprised of the isolated fixed points, $\mathcal{I}(K)=M\setminus L$. In addition, $\mathcal{I}(K)\setminus K$ is the disjoint union of $\hat{D}_1,\ldots, \hat{D}_l,\hat{H}_{l+1},\ldots, \hat{H}_n$, where the symbol $\hat{}$ indicates that we are removing the fixed points. By the construction, each component $\hat{D}_i$ has local complexity $k_i=0$ and each component $\hat{H}_i$ has local complexity $k_i\geq 1$. As a consequence, $\mathcal{I}(K)$ has complexity $g$,  Moreover, each $\hat{H}_i$ contains the sets $W^u(p)\setminus\{p\}$ and $W^s(p)\setminus\{p\}$ for some isolated saddle fixed point $p$, which are dissonant points for $K$. The number of attracting, topologically hyperbolic saddle and degenerate saddle fixed points is clear from the construction.

\begin{figure}[h]
\setlength{\unitlength}{1cm}
\includegraphics[scale=0.3]{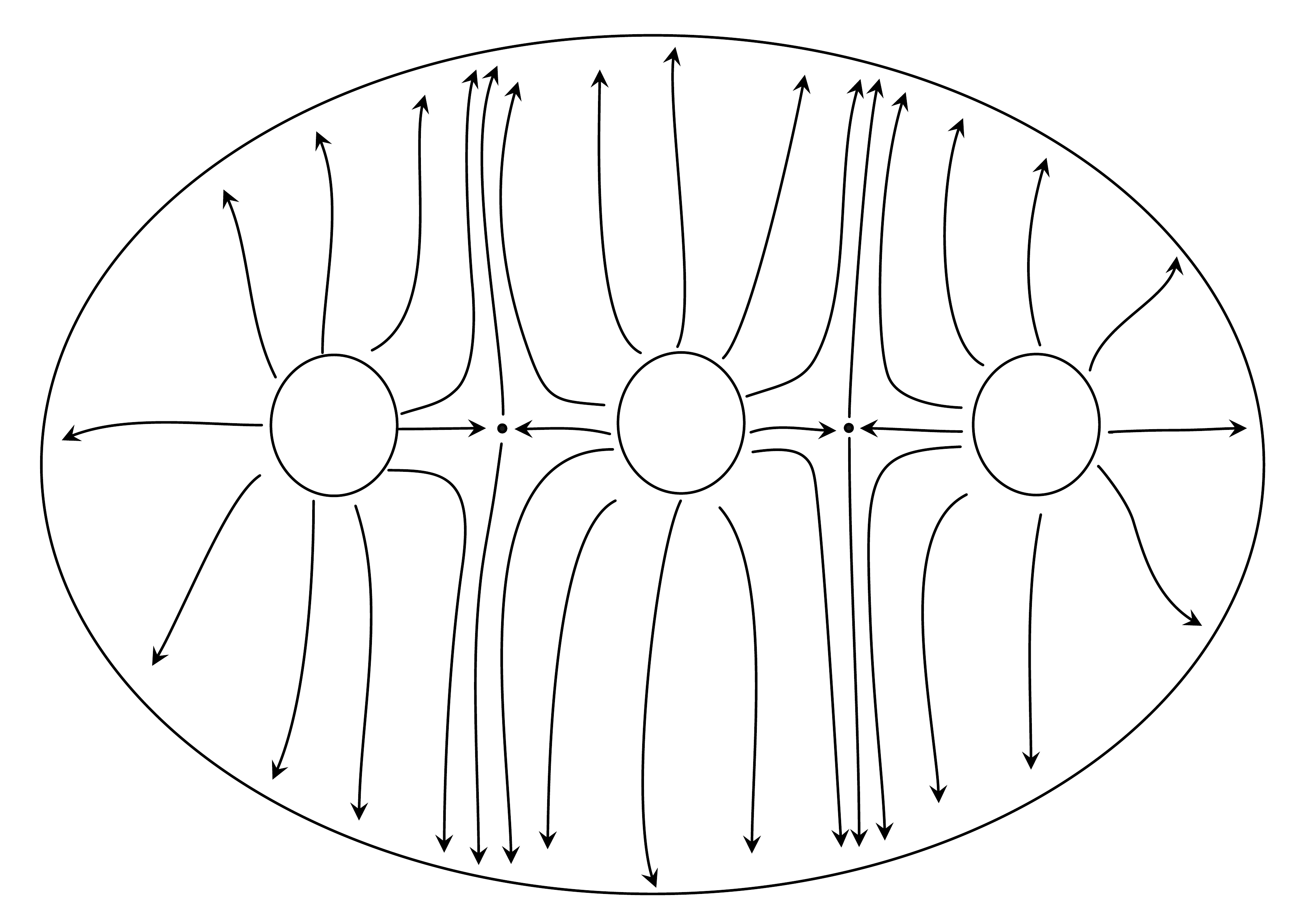}
\caption{Flow defined on a sphere with the interior of four topological closed disks removed. This flow has a Morse decomposition $\{M_1,M_2,M_3\}$ where $M_1$ is the attracting outer circle of fixed points, $M_2$ is the union of two topologically hyperbolic saddle fixed points and $M_3$ is the union of the three repelling inner circles of fixed points.}
\label{handle}
\end{figure}
\end{proof}

\section{Dynamical and homological robustness of non-saddle sets}\label{sec:4}

In this section we study necessary and sufficient conditions for the preservation of the property of being non-saddle by continuations. We start by recalling the basic notions of continuation theory.  

Let $M$ be an $n$-dimensional smooth manifold. We say that the family of flows $\varphi_\lambda:M\times\mathbb{R}\to  M$,  with $\lambda$ in the unit interval $I$, is a \emph{differentiable parametrized family of flows} if  the map $\varphi:M\times\mathbb{R}\times I\to M$ given by $\varphi(x,t,\lambda)=\varphi_\lambda(x,t)$ differentiable. In this context,  the family $(K_\lambda)_{\lambda\in J}$,  where $J\subset[0,1]$ is a closed (non-degenerate) subinterval and, for each $\lambda\in J$, $K_\lambda$ is an isolated invariant set for $\varphi_\lambda$ is said to be a \emph{continuation} if for each $\lambda_0\in J$ and each $N_{\lambda_0}$ isolating neighborhood for $K_{\lambda_0}$, there exists $\delta>0$ such that $N_{\lambda_0}$ is an isolating neighborhood for $K_\lambda$ for every $\lambda\in (\lambda_0 -\delta, \lambda_0 + \delta)\cap J$. We say that the family $(K_\lambda)_{\lambda\in J}$ is a continuation of $K_{\lambda_0}$ for each $\lambda_0\in J$.

Notice that \cite[Lemma~6.1]{Sal} ensures that if $K_{\lambda_0}$ is an isolated invariant set for $\varphi_{\lambda_0}$, there always exists  a continuation $(K_\lambda)_{\lambda\in J_{\lambda_0}}$ of $K_{\lambda_0}$ for some closed (non-degenerate) subinterval $\lambda_0\in J_{\lambda_0}\subset[0,1]$.

There is a simpler definition of continuation based on \cite[Lemma 6.2]{Sal}. There, it is proved that if $\varphi_\lambda : M \times\mathbb{R}\to M$ is a parametrized family of flows and if $N_1$ and $N_2$ are isolating neighborhoods of the same isolated invariant set for $\varphi_{\lambda_0}$, then there exists $\delta>0$ such that $N_1$ and $N_2$ are isolating neighborhoods for $\varphi_\lambda$, for every $\lambda\in(\lambda_0-\delta,\lambda_0 +\delta)\cap[0,1]$, with the property that, for every $\lambda$, the isolated invariant subsets in $N_1$ and $N_2$ which have $N_1$ and $N_2$ as isolating neighborhoods agree. Therefore, the family $(K_\lambda)_{\lambda\in J}$, with $K_\lambda$ an isolated invariant set for $\varphi_\lambda$, is a continuation if for every $\lambda_0\in J$ there are an isolating neighborhood $N_{\lambda_0}$ for $K_{\lambda_0}$ and a $\delta > 0$ such that $N_{\lambda_0}$ is an isolating neighborhood for $K_\lambda$, for every $\lambda\in(\lambda_0-\delta,\lambda_0 +\delta)\cap J$.

It  is well known (see \cite[Theorem~4]{sanjuni}) that attractors are robust from the dynamical and topological points of view since an attractor $K$ locally continues to a family of attractors with the shape of $K$. This is not the case for isolated non-saddle sets as it was shown in \cite{GSRo}  (see Figure~\ref{continuation}). In fact, neither the property of being non-saddle nor the topological properties of the original non-saddle set are preserved by local continuation. However, it turns out that there exist some relations between the preservation of certain topological properties by continuation and the preservation of the dynamical property of non-saddleness. For instance, if the phase space is a surface \cite[Theorem~26]{BNLA} or is a closed orientable smooth manifold with trivial integral first cohomology group \cite[Theorem~39]{BSdis}, the property of being non-saddle is preserved by continuation if and only if the shape is preserved.   

\begin{figure}
\centering
\includegraphics[scale=1]{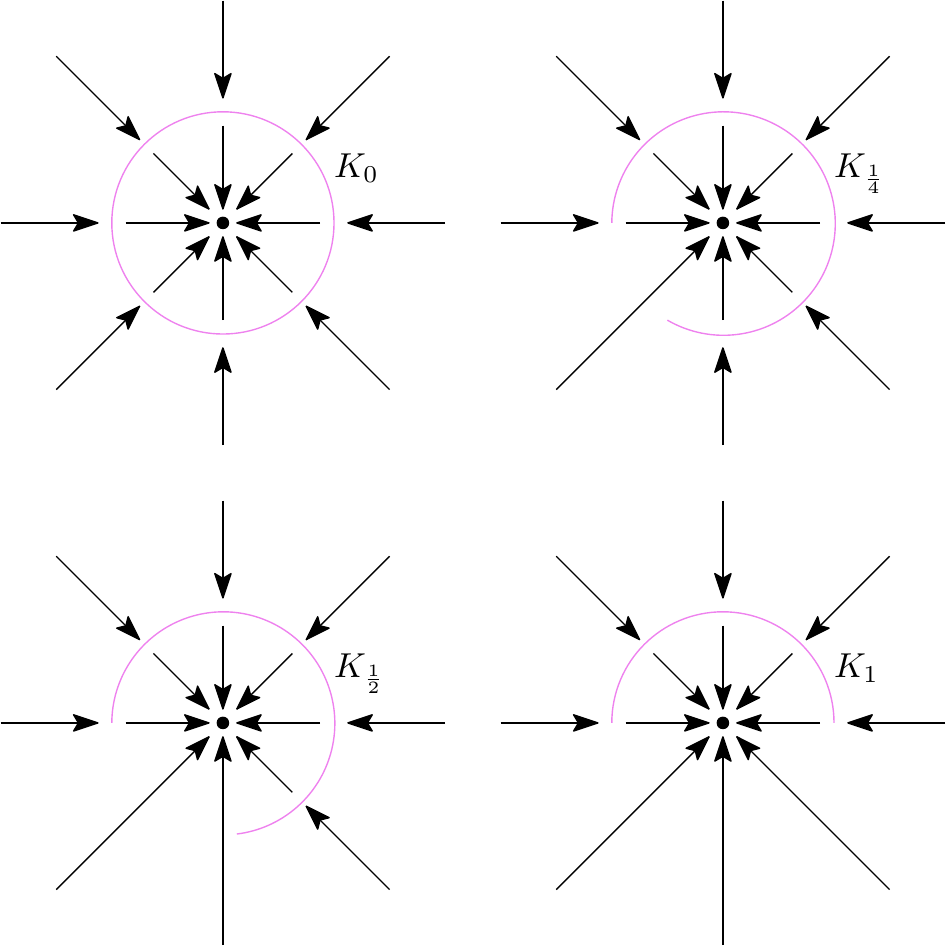}
\caption{An isolated non-saddle circle which continues to a family of saddle sets which are contractible.}
\label{continuation}
\end{figure}

The next result can be extracted from the proof of \cite[Theorem~5]{GSRo} and is a consequence of the robustness properties of transversality.

\begin{lemma}\label{robust}
Let $\varphi_\lambda:M\times\mathbb{R}\to M$ be a smooth parametrized family of flows (parametrized by $\lambda\in I$, the unit interval) defined on an $n$-dimensional smooth manifold. Suppose that $K_0$ is a connected isolated non-saddle set for $\varphi_0$ and that $N=N^+\cup N^-$ is a differentiable isolating block manifold for $K_0$. Then, there exists $\lambda_0>0$ such that $N$ is an isolating block for $0<\lambda<\lambda_0$ with the same entrance and exit sets. 
\end{lemma}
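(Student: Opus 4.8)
The plan is to reduce the statement to the openness (robustness) of transversality on the compact hypersurface $\partial N$, after first using the special structure $N=N^+\cup N^-$ to eliminate the tangency set. Let $X_\lambda$ denote the vector field generating $\varphi_\lambda$; since the family is differentiable, $X_\lambda(x)$ depends continuously on $(x,\lambda)$. Because $\partial N$ is a smooth two-sided hypersurface, I would fix a smooth defining function $g$ on a neighborhood of $\partial N$ with $g^{-1}(0)=\partial N$, with $g<0$ on the interior side and $g>0$ on the exterior side, and set $\dot g_\lambda(x)=dg_x(X_\lambda(x))$. The sign of $\dot g_\lambda$ then records whether the flow $\varphi_\lambda$ crosses $\partial N$ inward or outward at $x$.

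The key preliminary step, and the point where the hypothesis $N=N^+\cup N^-$ is essential, is to show that the tangency set $N^i\cap N^o=\partial N^i=\partial N^o$ is empty. Indeed, a point $x\in N^i\cap N^o$ would leave $N$ both in the immediate past (as a point of $N^i$, so $x\notin N^-$) and in the immediate future (as a point of $N^o$, so $x\notin N^+$); since $x\in\partial N\subset N=N^+\cup N^-$, this is a contradiction. Consequently $N^i$ and $N^o$ are disjoint compact sets covering $\partial N$, hence unions of connected components of $\partial N$, and $X_0$ is \emph{strictly} transverse to $\partial N$ everywhere, namely $\dot g_0<0$ on $N^i$ and $\dot g_0>0$ on $N^o$, with no tangencies left to control.

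With the tangency set gone, I would finish by a standard compactness-plus-continuity argument. As $N^i$ and $N^o$ are compact and $\dot g_0$ is continuous and nowhere zero on them, there is $c>0$ with $\dot g_0\leq -c$ on $N^i$ and $\dot g_0\geq c$ on $N^o$. By continuity of $(x,\lambda)\mapsto\dot g_\lambda(x)$ together with compactness of $\partial N$, there is $\lambda_0>0$ such that for $0<\lambda<\lambda_0$ one still has $\dot g_\lambda<0$ on $N^i$ and $\dot g_\lambda>0$ on $N^o$. For such $\lambda$ the flow $\varphi_\lambda$ is transverse to $\partial N$, pointing inward along $N^i$ and outward along $N^o$, so conditions (1)--(3) in the definition of isolating block hold verbatim with the \emph{same} entrance and exit sets $N^i,N^o$; moreover every boundary point leaves $N$ in forward or backward time, whence the maximal invariant set of $\varphi_\lambda$ in $N$ lies in $\mathring N$ and $N$ is genuinely an isolating block for $\varphi_\lambda$.

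The only genuine obstacle is the first step. Without the decomposition $N=N^+\cup N^-$ the tangency set $N^i\cap N^o$ is generally nonempty, and an arbitrary perturbation would move it, thereby changing the entrance and exit sets; it is precisely the non-saddle structure that forces this set to be empty and so makes the conclusion ``with the same entrance and exit sets'' possible. Everything after that is simply the openness of the transversality condition over the compact boundary $\partial N$.
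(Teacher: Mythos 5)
Your proof is correct and follows exactly the route the paper intends: the paper gives no argument for this lemma beyond attributing it to the proof of \cite[Theorem~5]{GSRo} and to ``the robustness properties of transversality'', and your two steps --- emptiness of the tangency set $N^i\cap N^o$ forced by $N=N^+\cup N^-$, then openness of strict transversality over the compact boundary --- are precisely that argument written out. One point worth making explicit: strict transversality of $X_0$ along $\partial N\setminus(N^i\cap N^o)$ is part of what ``differentiable isolating block manifold'' means (as in the Conley--Easton construction), not a logical consequence of the purely topological entrance/exit conditions, which by themselves would allow inflection-type (cubic) tangencies at points of $N^i\cup N^o$; under such a reading your compactness estimate --- and indeed the lemma itself --- would fail, so the transversality convention should be stated rather than inferred from the disjointness of $N^i$ and $N^o$.
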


The following proposition gives a necessary and suficient condition for the robustness of non-saddleness.

\begin{proposition}\label{rchar}
Let $\varphi _{\lambda },$ with $\lambda \in \lbrack 0,1]$, be a differentiable parametrized family of flows on a connected differentiable $n$-manifold $M$ and $K_{0}$ be a connected isolated non-saddle set of $\varphi_0$. Suppose that $K_0$ continues to a family $(K_\lambda)_{\lambda\in [0,\delta]}$ of non-empty isolated invariant compacta. Then,  $K_\lambda$ is non-saddle for $\lambda>0$ sufficiently small if and only if there exists a connected differentiable isolating block manifold $N$ of $K_0$ such that $N$ isolates $K_\lambda$  and that each component of $N\setminus K_\lambda$ contains exactly a component of $\partial N$.
\end{proposition}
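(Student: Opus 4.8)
The plan is to fix one convenient connected isolating block of $K_0$ and to reduce the whole equivalence to a single condition on the orbits that transit that block. First I would take a connected differentiable isolating block manifold $N=N^+\cup N^-$ of $K_0$, which exists because $K_0$ is a connected isolated non-saddle set. Writing $N^i,N^o$ for its entrance and exit sets, the crucial elementary observation is that the tangency set $N^i\cap N^o=\partial N^i=\partial N^o$ is empty: a point in it would leave $N$ both in the future and in the past, hence would lie in $N\setminus(N^+\cup N^-)$, which is empty. Consequently $\partial N=N^i\sqcup N^o$ and every connected component of $\partial N$ is \emph{entirely} an entrance component or \emph{entirely} an exit component. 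By Lemma~\ref{robust}, after shrinking $\delta$ if necessary, the same $N$ is an isolating block for $\varphi_\lambda$ with the very same sets $N^i,N^o$ for all $0<\lambda<\delta$, and by the definition of continuation $N$ isolates $K_\lambda$. In particular the splitting $\partial N=N^i\sqcup N^o$ into whole boundary components persists for every small $\lambda$.

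With $N$ fixed, I would prove the proposition through the chain of equivalences: $K_\lambda$ is non-saddle $\iff$ $N=N^+_\lambda\cup N^-_\lambda$ (i.e.\ $\varphi_\lambda$ has no orbit transiting $N$ from $N^i$ to $N^o$) $\iff$ each component of $N\setminus K_\lambda$ contains exactly one component of $\partial N$. The implication $N=N^+_\lambda\cup N^-_\lambda\Rightarrow K_\lambda$ non-saddle is immediate from the characterisation of non-saddle sets by blocks of the form $N^+\cup N^-$. For the topological equivalence, the flow of $\varphi_\lambda$ deformation retracts $N^+_\lambda\setminus K_\lambda$ onto $N^i$ (running time backwards to the entrance) and $N^-_\lambda\setminus K_\lambda$ onto $N^o$ (running time forwards to the exit); when $N=N^+_\lambda\cup N^-_\lambda$ these pieces are disjoint and cover $N\setminus K_\lambda$, so the flow retracts $N\setminus K_\lambda$ onto $\partial N=N^i\sqcup N^o$. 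A component $C$ of $N\setminus K_\lambda$ is open and closed in the locally connected space $N\setminus K_\lambda$, so $C\cap\partial N$ is a union of components of $\partial N$; but the retraction restricted to $C$ has image exactly $C\cap\partial N$, which is therefore connected. Together with the bijection on $\pi_0$ induced by the retraction, this yields exactly one component of $\partial N$ in each $C$. Conversely, if some orbit transits $N$, its compact segment from an entrance point $y\in N^i$ to an exit point $z\in N^o$ avoids the invariant set $K_\lambda$ and hence lies in a single component $C$ of $N\setminus K_\lambda$; since $N^i\cap N^o=\emptyset$, the components of $\partial N$ through $y$ and through $z$ are distinct, so $C$ contains at least two components of $\partial N$ and the component condition fails.

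The one genuinely delicate point, and the step I expect to be the main obstacle, is the implication $K_\lambda$ non-saddle $\Rightarrow$ no orbit transits $N$, i.e.\ $N=N^+_\lambda\cup N^-_\lambda$. The difficulty is that non-saddleness only controls orbits that come close to $K_\lambda$, whereas an arbitrary transit orbit could a priori cross $N$ far from $K_\lambda$. Here I would exploit that at $\lambda=0$ there are no transit orbits at all: by a compactness argument in the isolating block together with continuity of $\varphi$ in $\lambda$, any transit orbit that survives for small $\lambda>0$ must have transit time tending to $\infty$ as $\lambda\to0$, and the standard property of isolating neighbourhoods that, for a fixed neighbourhood $V$ of $K_\lambda$, an orbit segment remaining in $N$ long enough must meet $V$. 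Choosing $V$ as in the definition of non-saddleness for the isolating neighbourhood $N$, such a long transit orbit would produce a point of $V$ whose trajectory leaves $N$ both in the future and in the past, contradicting that $K_\lambda$ is non-saddle. This forces $N=N^+_\lambda\cup N^-_\lambda$ and closes the chain of equivalences. When $K_\lambda$ happens to be connected one may instead invoke \cite[Proposition~3]{BSdis} directly, since a connected isolating block of a connected non-saddle set is automatically of the form $N^+_\lambda\cup N^-_\lambda$.
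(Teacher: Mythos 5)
Your overall architecture is the same as the paper's: fix a connected differentiable isolating block manifold $N=N^+\cup N^-$ of $K_0$, use Lemma~\ref{robust} to keep $N$ as an isolating block with the same (disjoint) entrance and exit sets for small $\lambda$, derive the component condition from the flow-defined deformation retraction of $N\setminus K_\lambda$ onto $\partial N$ once one knows $N=N^+_\lambda\cup N^-_\lambda$, and, conversely, turn a transit orbit into a path in $N\setminus K_\lambda$ joining a component of $N^i$ to a necessarily different component of $N^o$. You are also right to single out the implication ``$K_\lambda$ non-saddle $\Rightarrow$ no orbit transits $N$'' as the delicate point: the paper absorbs it into its appeal to Lemma~\ref{robust}, whereas you attempt an explicit proof. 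It is precisely there that your argument has a genuine gap.

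The gap is a quantifier mismatch between your two ingredients. The ``standard property'' of isolating neighbourhoods yields, for the \emph{fixed} flow $\varphi_\lambda$ and the \emph{fixed} neighbourhood $V_\lambda$ supplied by non-saddleness, a threshold $T(\lambda,V_\lambda)$ beyond which orbit segments in $N$ must meet $V_\lambda$; your compactness argument yields a lower bound $L(\lambda)\to\infty$ for transit times as $\lambda\to 0$. Nothing compares $L(\lambda)$ with $T(\lambda,V_\lambda)$: both may blow up as $\lambda\to 0$, and for a fixed small $\lambda$ a transit orbit, however ``long'' in absolute terms, need not be long enough to be forced into $V_\lambda$. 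Worse, the statement you are trying to force is actually unreachable this way: non-saddleness of $K_\lambda$ \emph{guarantees} a neighbourhood $V_\lambda$ of $K_\lambda$ that every transit orbit avoids (a point of $V_\lambda$ on a transit orbit leaves $N$ in both time directions), and this is exactly what happens in saddle-node--type continuations, where $K_\lambda$ is much smaller than $K_0$ and orbits linger near the ``ghost'' of $K_0$ while staying far from $K_\lambda$; so transit times for $\varphi_\lambda$ can remain far below $T(\lambda,V_\lambda)$. A correct argument works at fixed $\lambda$ and runs in the opposite direction: if the open transit set $\Sigma=N\setminus(N^+_\lambda\cup N^-_\lambda)$ were non-empty, then (i) since $\overline{\Sigma}$ misses the neighbourhood $V_\lambda$, the compactum $\overline{\Sigma}$ contains no complete orbit, so transit times through $N$ are \emph{uniformly bounded} for $\varphi_\lambda$; but (ii) since $N$ is connected and $\Sigma\neq N$, $\Sigma$ has a boundary point $p\in N^+_\lambda\cup N^-_\lambda$, and because $N^i\cap N^o=\emptyset$ the positive (or negative) semi-orbit of $p$ stays in $\mathring{N}$ after time $0$, so transit points accumulating at $p$ have exit (or entrance) times tending to infinity, contradicting (i). This connectedness-plus-boundary argument (or, when $K_\lambda$ is connected, a direct appeal to \cite[Proposition~3]{BSdis} --- but connectedness of $K_\lambda$ is not among the hypotheses) is what your sketch is missing; the behaviour of transit times as $\lambda\to 0$, which you rely on, plays no role in it.
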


\begin{proof}
Suppose that $K_\lambda$ is non-saddle for $\lambda$ sufficiently small. Then, if we choose $N$ to be a connected differentiable isolating block manifold of $K_0$, Lemma~\ref{robust} ensures that there exists $\lambda_0>0$ such that, for $\lambda\in[0,\lambda_0)$, $N$ is an isolating block of the form $N=N^+\cup N^-$ for $K_\lambda$. The necessity follows from the fact that $\partial N$ is a deformation retract of $N\setminus K_\lambda$.

Conversely, suppose that there exists a connected differentiable isolating block manifold $N$ of $K_0$ and a $\lambda_1>0$ such that, for every $\lambda\in[0, \lambda_1)$, $N$ isolates $K_\lambda$ and  each component of $N\setminus K_\lambda$ contains exactly a component of $\partial N$. Lemma~\ref{robust} guarantees that there exists $\lambda_1'>0$ such that $N$ is an isolating block of $K_\lambda$ for $\lambda\in[0,\lambda_1')$ satisfying that the entrance and exit sets for $\varphi _{\lambda }$ are disjoint and that they agree with those for $\varphi_{0}.$ We may assume that in fact $\lambda_1'=\lambda_1$. Suppose that  $K_\lambda$ is saddle for some $\lambda\in (0,\lambda_1)$. Then, there exists $x_0\in N\setminus (N^+\cup N^-)$ and, hence, if we consider the entrance and exit times $t_\lambda^i(x_0)$ and $t_\lambda^o(x_0)$ we have that $x_0[t_\lambda^i(x_0),t_\lambda^o(x_0)]$ is a path in $N\setminus K_\lambda$ joining a component of $N^i$ with a component of $N^o$ which, by the previous discussion must be different components of $\partial N$. This contradiction proves the converse statement.
\end{proof}

\begin{corollary}
 Suppose $\varphi_\lambda: M\times\mathbb{R}\to M$ is a differentiable parametrized family of flows defined on a connected $n$-dimensional differentiable manifold $M$. Let $K_0$ be an isolated non-saddle continuum for $\varphi_0$ and $(K_\lambda)_{\lambda\in[0,\delta]}$ a continuation of $K_0$. Suppose that there exists $\lambda_0>0$ such that, for $\lambda\in[0,\lambda_0)$, $K_0\subset K_\lambda$. Then, $K_\lambda$ is non-saddle for $\lambda>0$ sufficiently small.
\end{corollary}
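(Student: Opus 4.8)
The plan is to reduce the corollary to the characterization in Proposition~\ref{rchar}: it suffices to exhibit one connected differentiable isolating block manifold $N$ of $K_0$ which isolates $K_\lambda$ and for which each component of $N\setminus K_\lambda$ contains exactly one component of $\partial N$, for all sufficiently small $\lambda>0$. I would choose $N=N^+\cup N^-$ to be a connected differentiable isolating block manifold of the non-saddle continuum $K_0$, whose existence is guaranteed by the characterization of isolated non-saddle sets as those admitting isolating blocks of this form. By Lemma~\ref{robust} there is some $\lambda_1>0$ so that, for $\lambda\in[0,\lambda_1)$, $N$ is an isolating block for $K_\lambda$ with the same entrance and exit sets, in particular with the same boundary $\partial N$. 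Shrinking $\lambda_1$ if necessary, I may also assume $\lambda<\lambda_0$, so that $K_0\subset K_\lambda$, throughout $[0,\lambda_1)$.

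The heart of the argument is to show that, under the hypothesis $K_0\subset K_\lambda$, the inclusion $\partial N\hookrightarrow N\setminus K_\lambda$ induces a bijection on connected components, which is equivalent to the desired condition. I would establish the two directions separately. For surjectivity (every component of $N\setminus K_\lambda$ meets $\partial N$), given $x\in N\setminus K_\lambda$, since $K_\lambda$ is the maximal invariant set in the isolating neighborhood $N$ and $x\notin K_\lambda$, the full orbit of $x$ cannot be contained in $N$; hence at least one of the exit time $t^o(x)$ or the entrance time $t^i(x)$ is finite. The corresponding orbit segment stays in $N$, ends on $\partial N$, and, because $K_\lambda$ is invariant and $x\notin K_\lambda$, never meets $K_\lambda$, so $x$ is joined inside $N\setminus K_\lambda$ to a boundary point. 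For injectivity I would use the inclusion $N\setminus K_\lambda\subseteq N\setminus K_0$ coming from $K_0\subset K_\lambda$: since $K_0$ is non-saddle and $N=N^+\cup N^-$, the flow of $\varphi_0$ deformation retracts $N\setminus K_0$ onto $\partial N$, so each component of $N\setminus K_0$ contains exactly one component of $\partial N$. Thus two components of $\partial N$ lying in a common component of $N\setminus K_\lambda$ would also lie in a common component of $N\setminus K_0$, forcing them to coincide.

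Combining the two directions shows that each component of $N\setminus K_\lambda$ contains exactly one component of $\partial N$ for every $\lambda\in[0,\lambda_1)$, which is precisely the hypothesis of the converse part of Proposition~\ref{rchar}; applying that proposition yields that $K_\lambda$ is non-saddle for $\lambda>0$ small. The main subtlety I anticipate is not the topological bookkeeping but ensuring the surjectivity step is valid \emph{without} assuming $N$ has the split form $N=N^+_\lambda\cup N^-_\lambda$ for $\varphi_\lambda$, which would already presuppose the non-saddleness we are trying to prove. For this I rely only on $N$ being an isolating block for $K_\lambda$, so that the exit and entrance times are well defined and land on $\partial N$, together with the invariance of $K_\lambda$. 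The hypothesis $K_0\subset K_\lambda$ is used only in the injectivity step, where it is exactly what prevents distinct boundary components from being merged into a single component of $N\setminus K_\lambda$.
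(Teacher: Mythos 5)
Your proposal is correct and follows exactly the route the paper intends: the corollary is stated as an immediate consequence of Proposition~\ref{rchar}, and your argument supplies precisely that reduction, using Lemma~\ref{robust} to keep $N$ an isolating block, the exit/entrance times for surjectivity on components, and the inclusion $N\setminus K_\lambda\subseteq N\setminus K_0$ together with the deformation retraction of $N\setminus K_0$ onto $\partial N$ for injectivity. The handling of the subtlety you flag (not presupposing $N=N_\lambda^+\cup N_\lambda^-$ for $\varphi_\lambda$) is sound, since only maximality and invariance of $K_\lambda$ in $N$ are used.
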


We state the following result without proof since its proof is a small modification of the proof of \cite[Theorem~39]{BSdis} using Theorem~\ref{structure}.

\begin{theorem}\label{robustness}
Let $\varphi _{\lambda },$ with $\lambda \in \lbrack 0,1]$, be a differentiable parametrized family of flows on a connected $G$-orientable differentiable $n$-manifold $M$ with $H^1(M;G)=0$ and $K_{0}$ be a connected isolated non-saddle set for $\varphi_0$ that continues to a family $(K_\lambda)_{\lambda\in[0,\delta]}$ of non-empty isolated invariant compacta. Then, $K_\lambda$ is non-saddle for $\lambda>0$ sufficiently small if and only if $\check{H}^*(K_0;G)\cong\check{H}^*(K_\lambda;G)$.
\end{theorem}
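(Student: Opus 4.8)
The plan is to convert the dynamical notion of non-saddleness into a count of connected components and then to extract that count from $\check{H}^{n-1}$ via Alexander--Lefschetz duality. First I would use that $K_0$ is a connected isolated non-saddle set to fix a connected differentiable isolating block manifold $N=N^+\cup N^-$ of $K_0$, and write $B_1,\dots,B_r$ for the connected components of $\partial N$. By Lemma~\ref{robust} there is $\lambda_0>0$ such that for $0\le\lambda<\lambda_0$ the \emph{same} $N$ is an isolating block for $K_\lambda$ with the same disjoint entrance and exit sets, hence with the same boundary decomposition $\partial N=B_1\sqcup\cdots\sqcup B_r$. Theorem~\ref{structure}, which applies to $K_0$ precisely because $H^1(M;G)=0$ forces $i^*$ to be a monomorphism, together with the flow retraction of $N\setminus K_0$ onto $\partial N$, shows that $N\setminus K_0$ has exactly $r$ components. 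A short flow argument shows every component of $N\setminus K_\lambda$ meets $\partial N$ (a point outside $K_\lambda$ leaves $N$ forward or backward in time without first meeting the invariant set $K_\lambda$, so it is joined inside $N\setminus K_\lambda$ to an exit or entrance point), hence contains a whole $B_j$; thus $N\setminus K_\lambda$ has at most $r$ components, and by Proposition~\ref{rchar} the set $K_\lambda$ is non-saddle if and only if $N\setminus K_\lambda$ has exactly $r$ components, i.e. each $B_j$ lies in its own component.

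For the forward implication I would argue directly: if $K_\lambda$ is non-saddle then Proposition~\ref{rchar} furnishes (for our block) the bijection between components of $N\setminus K_\lambda$ and of $\partial N$, which rules out saddle trajectories and forces $N=N^+_\lambda\cup N^-_\lambda$. Consequently $K_\lambda\hookrightarrow N$ is a shape equivalence, and since $K_0\hookrightarrow N$ is one as well, both sides carry the \v Cech cohomology of $N$, so $\check{H}^*(K_0;G)\cong\check{H}^*(K_\lambda;G)$.

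The substance is the converse. Assuming $\check{H}^*(K_0;G)\cong\check{H}^*(K_\lambda;G)$ (only degree $n-1$ is needed) I would use Alexander duality in the open $G$-orientable manifold $\mathring{N}$, namely $\check{H}^{n-1}(K;G)\cong H_1(N,N\setminus K;G)$, together with the tail
\[
H_1(N;G)\xrightarrow{\ \alpha\ }H_1(N,N\setminus K;G)\xrightarrow{\ \partial\ }\widetilde{H}_0(N\setminus K;G)\to 0
\]
of the homology sequence of $(N,N\setminus K)$, which yields $\rank\check{H}^{n-1}(K;G)=\rank\im\alpha+(c-1)$, where $c$ is the number of components of $N\setminus K$. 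Naturality of the duality isomorphisms identifies $\alpha$ with the composite $\check{H}^{n-1}(N,\partial N;G)\xrightarrow{\rho}\check{H}^{n-1}(N;G)\xrightarrow{i^*}\check{H}^{n-1}(K;G)$, so $\im\alpha\cong i^*(\im\rho)$. Because $K_0\hookrightarrow N$ is a shape equivalence, $i_0^*$ is an isomorphism and $\rank\im\alpha_0=\rank\im\rho$, whereas for $K_\lambda$ one only gets $\rank\im\alpha_\lambda=\rank i_\lambda^*(\im\rho)\le\rank\im\rho$. Feeding the equality of the $(n-1)$-ranks into the two rank formulas gives $c_\lambda-r=\rank\im\alpha_0-\rank\im\alpha_\lambda\ge 0$, hence $c_\lambda\ge r$; combined with $c_\lambda\le r$ from the first paragraph this forces $c_\lambda=r$, and $K_\lambda$ is non-saddle by Proposition~\ref{rchar}.

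The main obstacle I anticipate is making the naturality step fully rigorous: one must verify that capping with the fundamental class $[N,\partial N]$ intertwines the homology map $\alpha$ with the cohomological restriction $i^*\circ\rho$ (using $K\subset\mathring{N}$, so $K$ misses $\partial N$), and that all the groups involved are of finite type so the rank bookkeeping is legitimate over both $G=\mathbb{Z}$ (free rank, ignoring torsion) and $G=\mathbb{Z}_2$ (dimension); finite type for $K_\lambda$ is supplied by the hypothesis $\check{H}^*(K_0;G)\cong\check{H}^*(K_\lambda;G)$ itself. I would also double-check that $G$-orientability is exactly what powers both duality isomorphisms, and emphasize that only the inequality $\rank\im\alpha_\lambda\le\rank\im\alpha_0$ is required rather than any precise computation of these images, which is what keeps the argument a small modification of \cite[Theorem~39]{BSdis}.
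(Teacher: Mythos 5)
Your proposal is correct in substance, but it is not the paper's argument --- in fact the paper gives no written proof at all, describing the result as a small modification of the proof of \cite[Theorem~39]{BSdis} using Theorem~\ref{structure}. That intended route runs the duality computation in the ambient manifold: Alexander duality in $M$ gives $\check{H}^{n-1}(K_\lambda;G)\cong H_1(M,M\setminus K_\lambda;G)$, and the hypothesis $H^1(M;G)=0$ is exactly what kills the image of $H_1(M;G)\to H_1(M,M\setminus K_\lambda;G)$, a correction term which otherwise cannot be compared between $\lambda=0$ and $\lambda>0$, because $K_0\hookrightarrow M$ induces no cohomological isomorphism. You instead run the whole computation inside the fixed block $N$, where the corresponding correction term \emph{can} be compared across $\lambda$: by naturality of Alexander--Lefschetz duality it is carried onto $i_\lambda^*(\im\rho)$, and for $\lambda=0$ the inclusion $K_0\hookrightarrow N$ is a shape equivalence, so $\rank\im\alpha_0=\rank\im\rho$. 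Combined with the a priori bound $c_\lambda\le r$ from your escape-to-the-boundary argument (here $c_\lambda$ is the number of components of $N\setminus K_\lambda$ and $r$ that of $\partial N$) and with Proposition~\ref{rchar}, this yields a complete proof: I checked both the commutativity of the duality square (cap product with $[N,\partial N]$ does intertwine $\alpha$ with $i^*\circ\rho$, since the Alexander isomorphism $\check{H}^{n-1}(K)\cong H_1(N,N\setminus K)$ is the direct limit of cap products with restricted fundamental classes over neighborhoods of $K$, of which $N$ itself is one) and the rank bookkeeping $\rank\check{H}^{n-1}(K_\lambda)=\rank i_\lambda^*(\im\rho)+(c_\lambda-1)$.

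Two remarks you should take seriously. First, your proof never actually uses $H^1(M;G)=0$: the one place you invoke it (through Theorem~\ref{structure}, to get $c_0=r$) is redundant, since the flow retraction of $N\setminus K_0$ onto $\partial N$ already gives that count. So, written out in full, your argument proves the equivalence on \emph{every} connected $G$-orientable smooth manifold --- strictly more than the statement, and precisely what the author declares himself unable to establish in the discussion preceding Theorem~\ref{strongrob}; your converse direction in effect recovers the hard implication of Theorem~\ref{strongrob} from the weaker, purely abstract cohomological hypothesis. This is either a genuine strengthening or a warning sign, and since the only non-elementary ingredient is the duality naturality, that is the step you must write out completely rather than leave as an anticipated obstacle. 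Second, a small imprecision: Proposition~\ref{rchar} as stated only asserts the existence of \emph{some} isolating block manifold with the component property, not that your chosen $N$ has it; this is harmless (the paper's proof of Proposition~\ref{rchar} establishes the property for an arbitrary connected differentiable isolating block manifold of $K_0$, and your argument works verbatim with whichever block is supplied), but it should be said explicitly.
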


Theorem~\ref{robustness} establishes that the robustness of a topological property, namely \v Cech cohomology, is equivalent to the robustness of the dynamical property of non-saddleness if the phase space is a smooth manifold with trivial first cohomology group. So far we are unable to establish a equivalence between the robustness of  non-saddleness and the robustness of the \v Cech cohomology  for isolated non-saddle continua without further assumptions. However, we can prove the equivalence between the  robustness of non-saddleness with a strong form of the robustness of \v Cech cohomology.

\begin{theorem}\label{strongrob}
 Suppose $\varphi_\lambda: M\times\mathbb{R}\to M$ is a differentiable parametrized family of flows defined on a connected differentiable $n$-dimensional manifold $M$. Let $K_0$ be an isolated non-saddle continuum for $\varphi_0$ and suppose that $K_0$ continues to a family $(K_\lambda)_{\lambda\in[0,\delta]}$ of non-empty isolated invariant compacta. Then, $K_\lambda$ is non-saddle for $\lambda>0$ sufficiently small if and only if there exists a connected isolating block $N$ of $K_0$ such that $N$ isolates $K_\lambda$ and the inclusion $i_\lambda:K_\lambda\hookrightarrow N$ induces isomorphisms in \v Cech cohomology with $\mathbb{Z}_2$ coefficients.
\end{theorem}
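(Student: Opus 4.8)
The plan is to prove both implications by converting the cohomological condition into the geometric dictionary between the components of $N\setminus K_\lambda$ and those of $\partial N$ supplied by Proposition~\ref{rchar}. Recall that for a connected differentiable isolating block manifold $N$ of $K_0$ isolating $K_\lambda$, Proposition~\ref{rchar} makes non-saddleness of $K_\lambda$ (for small $\lambda$) equivalent to the requirement that each component of $N\setminus K_\lambda$ contain exactly one component of $\partial N$. Since $\partial N\subset N\setminus K_\lambda$ and every point of $N\setminus K_\lambda$ leaves $N$ and hence reaches $\partial N$, each component of $N\setminus K_\lambda$ already contains at least one component of $\partial N$; thus this requirement is equivalent to the inclusion $\partial N\hookrightarrow N\setminus K_\lambda$ inducing a bijection on connected components. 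The whole proof then reduces to turning ``$i_\lambda^*$ is an isomorphism'' into this bijection on $\pi_0$, and it is the $\mathbb{Z}_2$ coefficients that make this passage possible through duality without any orientability assumptions.

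For the direct implication, I would suppose $K_\lambda$ is non-saddle for small $\lambda$ and apply Proposition~\ref{rchar} to obtain a connected differentiable isolating block manifold $N$ of $K_0$ isolating $K_\lambda$ in which each component of $N\setminus K_\lambda$ contains exactly one component of $\partial N$. By the transient-trajectory argument in the proof of Proposition~\ref{rchar}, this forces $N=N^+\cup N^-$ for $K_\lambda$. As $K_\lambda$ is then an isolated non-saddle set possessing a block of the form $N^+\cup N^-$, the inclusion $i_\lambda:K_\lambda\hookrightarrow N$ is a shape equivalence by \cite{GMRSo}, and therefore induces isomorphisms in \v Cech cohomology with arbitrary coefficients, in particular with $\mathbb{Z}_2$. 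This $N$ witnesses the existence statement.

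The substance lies in the converse. Given a connected isolating block $N$ of $K_0$ with $i_\lambda^*:\check{H}^*(N;\mathbb{Z}_2)\to\check{H}^*(K_\lambda;\mathbb{Z}_2)$ an isomorphism, I would first pass to a connected differentiable isolating block manifold $N'\subset\mathring{N}$ of $K_0$, which for small $\lambda$ still isolates $K_\lambda$ and has disjoint entrance and exit sets for $\varphi_\lambda$ by Lemma~\ref{robust}. The isomorphism transfers to $N'$: since $K_0$ is a connected isolated non-saddle continuum, both $N$ and $N'$ are blocks of the form $N^+\cup N^-$ for $\varphi_0$ by \cite[Proposition~3]{BSdis}, so $K_0\hookrightarrow N'$ and $K_0\hookrightarrow N$ are shape equivalences; the triangle $K_0\hookrightarrow N'\hookrightarrow N$ shows that the flow-independent restriction $\check{H}^*(N;\mathbb{Z}_2)\to\check{H}^*(N';\mathbb{Z}_2)$ is an isomorphism, whence $\check{H}^*(N';\mathbb{Z}_2)\to\check{H}^*(K_\lambda;\mathbb{Z}_2)$ is one too and $\check{H}^*(N',K_\lambda;\mathbb{Z}_2)=0$. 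Now I would invoke Lefschetz--Alexander duality for the compact $n$-manifold with boundary $N'$ and the interior compactum $K_\lambda$, in the form $\check{H}^k(N',K_\lambda;\mathbb{Z}_2)\cong H_{n-k}(N'\setminus K_\lambda,\partial N';\mathbb{Z}_2)$ (the same duality used in the proof of Theorem~\ref{homoclinic}, now over $\mathbb{Z}_2$). The vanishing of $\check{H}^*(N',K_\lambda;\mathbb{Z}_2)$ gives $H_*(N'\setminus K_\lambda,\partial N';\mathbb{Z}_2)=0$, so $\partial N'\hookrightarrow N'\setminus K_\lambda$ is a $\mathbb{Z}_2$-homology isomorphism and in particular a bijection on components. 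Hence each component of $N'\setminus K_\lambda$ contains exactly one component of $\partial N'$, and Proposition~\ref{rchar} concludes that $K_\lambda$ is non-saddle for small $\lambda$.

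The main obstacle is this converse, and inside it the two non-formal steps: transferring the isomorphism to a smooth block manifold $N'$ so that manifold duality becomes available, and the duality step itself. Here the choice of $\mathbb{Z}_2$ coefficients is decisive, because neither $K_\lambda$ nor $N'$ need be orientable and $\check{H}^*(K_\lambda)$ may carry torsion; over $\mathbb{Z}_2$ the Lefschetz--Alexander isomorphism holds unconditionally and $\check{H}^0$ faithfully counts connected components, which is exactly the information the argument consumes.
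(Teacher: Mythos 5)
Your proposal is correct and follows essentially the same route as the paper's proof: pass to a connected differentiable isolating block manifold inside the given block via Lemma~\ref{robust}, transfer the hypothesis that $i_\lambda^*$ is an isomorphism to that smaller block, apply $\mathbb{Z}_2$ Alexander duality to get $H_*(N'\setminus K_\lambda,\partial N';\mathbb{Z}_2)=0$, read off the bijection on components from $H_0$, and conclude with Proposition~\ref{rchar}. The only differences are cosmetic (you transfer the isomorphism through the shape-equivalence triangle over $K_0$ where the paper uses the flow-induced deformation retraction of the big block onto the manifold block, and your forward direction routes through Proposition~\ref{rchar} rather than directly through Lemma~\ref{robust}), so there is nothing substantive to add.
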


\begin{proof}
Suppose that $K_\lambda$ is non-saddle for $\lambda>0$ sufficiently small. Consider a differentiable isolating block manifold $N$ of $K_0$. Then, by   Lemma~\ref{robust}, $N$ is an isolating block for $K_\lambda$ and, since $K_\lambda$ is non-saddle and $N$ is connected, the inclusion $i_\lambda: K\hookrightarrow N$ induces isomorphisms in \v Cech cohomology with $\mathbb{Z}_2$ coefficients as desired. 

Conversely, suppose that there exists a connected isolating block $N'$ of $K_0$ such that $N'$ isolates $K_\lambda$ for every $\lambda$ smaller that $\lambda_0>0$  and that the inclusion $i'_\lambda:K_\lambda\hookrightarrow N'$ induces isomorphisms in \v Cech cohomology with $\mathbb{Z}_2$ coefficients. Let $N$ be a connected differentiable isolating block manifold for $K_0$ contained in $N'$. Notice that $N$ is of the form $N^+\cup N^-$ and, hence, the flow $\varphi_0$ provides a deformation retraction from $N'$ onto $N$. We may assume that $N$ isolates $K_\lambda$ for $\lambda\in[0,\lambda_0)$ since, otherwise, we only have to choose a smaller $\lambda_0$. Since the inclusion $i'_\lambda:K_\lambda\hookrightarrow N'$ is the composition of the inclusions $i_\lambda:K_\lambda\hookrightarrow N$ and $j: N\hookrightarrow N'$ and $i'_\lambda$ and $j$ induce isomorphisms in \v Cech cohomology, it follows that $i_\lambda$ must also induce isomorphisms in \v Cech cohomology. As a consequence, $\check{H}^k(N,K_\lambda;\mathbb{Z}_2)= 0$ for $k\geq 0$ and $\lambda\in [0,\lambda_0)$. Then, Alexander duality ensures that $H_{n-k} (N\setminus K_\lambda,\partial N;\mathbb{Z}_2 ) = 0$ for each $k$ and from the terminal part of the long exact sequence of singular homology of  the pair $(N\setminus K_\lambda , \partial N )$, we deduce that the homomorphism $i_*^\lambda:H_0 (\partial N;\mathbb{Z}_2 )\to H_0 (N\setminus K_\lambda;\mathbb{Z}_2)$, induced by the inclusion $i^\lambda:\partial N\hookrightarrow N\setminus K_\lambda$, is an isomorphism for each $\lambda\in [0,\lambda_0)$.  Therefore, each component of $N\setminus K_\lambda$ contains exactly one component of $\partial N$ and the result follows from Proposition~\ref{rchar}.
\end{proof}

\begin{remark}
Notice that both Theorem~\ref{robustness} and Theorem~\ref{strongrob} can be established in shape theoretical terms. In Theorem~\ref{robustness} the isomorphism between \v Cech cohomologies may be substituted by a shape equivalence and in Theorem~\ref{strongrob} the condition about the inclusion inducing isomorphisms in \v Cech cohomology can be replaced by the condition of the inclusion being a shape equivalence.
\end{remark}

\section*{Acknowledgement}

The author is deeply grateful to Jos\'e M.R. Sanjurjo for his support and inspiration all these years. 

\bibliography{Biblio1}        
\bibliographystyle{plain}  

\end{document}